\renewcommand\AA{\mathbb{A}}
\newcommand\QQ{\mathbb{Q}}
\newcommand\CC{\mathbb{C}}
\newcommand\RR{\mathbb{R}}
\newcommand\TT{\mathcal{T}}
\newcommand\ZZ{\mathbb{Z}}
\newcommand\ZZp{\ZZ_{>0}}
\newcommand\ZZnz{\ZZ_{\ne 0}}
\newcommand\PP{\mathbb{P}}
\newcommand\xx{\mathbf{x}}
\newcommand\kk{\mathbf{k}}
\newcommand\EE{\mathcal{E}}
\newcommand\tx{\widetilde{X_4}}
\newcommand\dd{\,\mathrm{d}}
\newcommand\Ptwo{{\PP^2}}
\newcommand\Pfour{{\PP^4}}
\newcommand{\fbase}[5]{\ee^{({#1},{#2},{#3},{#4},{#5})}}
\newcommand{\congr}[3]{{#1} \equiv {#2}\ (\mathrm{mod}\ {#3})}
\DeclareMathOperator{\Pic}{Pic}
\DeclareMathOperator{\vol}{vol}
\DeclareMathOperator{\Spec}{Spec}
\DeclareMathOperator{\rank}{rank}
\newcommand\hcf{\gcd}
\newcommand\SI[2]{#1^{[#2]}}
\newcommand{\ex}[1]{#1}
\newcommand{\cp}[2]{{\hcf(#1,#2)=1}}
\newcommand{\ncp}[2]{{\hcf(#1,#2) > 1}}
\newcommand{\Aone}{{\mathbf A}_1}
\newcommand{\Atwo}{{\mathbf A}_2}
\newcommand{\Athree}{{\mathbf A}_3}
\newcommand{\Afour}{{\mathbf A}_4}
\newcommand{\Dfour}{{\mathbf D}_4}
\newcommand{\Dfive}{{\mathbf D}_5}
\newcommand{\tS}{{\widetilde S}}
\renewcommand{\le}{\leqslant}
\renewcommand{\ge}{\geqslant}
\newcommand{\ee}{\boldsymbol{\eta}}
\renewcommand{\aa}{\boldsymbol{\alpha}}
\newcommand\rto{\dashrightarrow}
\newcommand\e{\eta}
\newcommand\al{\alpha}
\newcommand\ep{\varepsilon}
\newcommand\RE{\Re e}
\newcommand\phis{\phi^*}
\newcommand\phid{\phi^\dagger}
\newcommand\Ga{\mathbb{G}_\mathrm{a}}
\newcommand\gonea{g_1^a}
\newcommand\goneb{g_1^b}
\newcommand\gtwoa{g_2^a}
\newcommand\gtwob{g_2^b}
\newcommand\gtwo{g_2}
\newcommand\Nzero{N_0}
\newcommand\Nonea{N_1^a}
\newcommand\Noneb{N_1^b}
\newcommand\Ntwoa{N_2^a}
\newcommand\Ntwob{N_2^b}
\newcommand\Rzero{R_0}
\newcommand\Ronea{R_1^a}
\newcommand\Roneb{R_1^b}
\newcommand\Rtwoa{R_2^a}
\newcommand\Rtwob{R_2^b}
\newcommand\thzero{\vartheta_0}
\newcommand\thonea{\vartheta_1^a}
\newcommand\thoneb{\vartheta_1^b}
\newcommand\thtwoa{\vartheta_2^a}
\newcommand\thtwob{\vartheta_2^b}
\newcommand\thtwo{\vartheta}
\renewcommand\rho{\varrho}
\renewcommand{\leq}{\leqslant}
\renewcommand{\geq}{\geqslant}
\newtheorem*{theorem}{Theorem}
\newtheorem{lemma}{Lemma}
\numberwithin{equation}{section}
\theoremstyle{definition}
\newtheorem*{ack}{Acknowledgements}
\begin{document}

\title[Manin's conjecture for a quartic del Pezzo surface] {Manin's
  conjecture for a quartic del Pezzo surface with $\Afour$ singularity}

\author{T.D. Browning}
\address{School of Mathematics, University of Bristol, Bristol BS8 1TW}

\email{t.d.browning@bristol.ac.uk}

\author{U. Derenthal} 

\address{Institut f\"ur Mathematik, Universit\"at
  Z\"urich, Winterthurerstrasse 190, 8057 Z\"urich, Switzerland}

\email{ulrich.derenthal@math.unizh.ch}

\subjclass[2000]{11G35 (14G05, 14G10)}

\begin{abstract}
  The Manin conjecture is established for a split singular del Pezzo surface
  of degree four, with singularity type $\Afour$.
\end{abstract}

\maketitle

\tableofcontents

\section{Introduction}\label{sec:intro}

The distribution of rational points on del Pezzo surfaces 
is a challenging topic that has enjoyed a surge of activity in recent
years. Guided by the largely unverified conjectures of Manin 
\cite{f-m-t} and his collaborators, the primary aim of this paper is
to investigate further the situation for 
split singular del Pezzo surfaces of degree $4$ in $\Pfour$, that are
defined over $\QQ$.
Our main achievement will be a proof of the Manin conjecture for 
the surface 
\begin{equation}
  \label{eq:surface}
   x_0x_1-x_2x_3 = x_0x_4+x_1x_2+x_3^2 = 0,
\end{equation}
which we denote by $S \subset \Pfour$. 
This surface contains a unique singularity of type $\Afour$ and exactly
three lines, all of which are defined over $\QQ$.

Let $U$ be the Zariski open subset formed by deleting the lines from $S$, and
let \[N_{U,H}(B) := \#\{x \in U(\QQ) \mid H(x) \le B\},\] for any $B\geq 1$.
Here $H$ is the usual height on $\Pfour$, in which the height $H(x)$ is
defined as $\max\{|x_0|,\dots,|x_4|\}$ for a point $x=(x_0:\ldots:x_4) \in
U(\QQ)$, provided that $\xx=(x_0,\ldots,x_4)$ has integral coordinates that
are relatively coprime. Bearing this in mind, the following is our principal
result.

\begin{theorem}
  We have \[N_{U,H}(B) =
  c_{S,H}B(\log B)^5 + O\big(B(\log B)^{5-2/7}\big),\] 
where \[c_{S,H} = \frac{1}{21600} \cdot
  \omega_\infty \cdot \prod_p\left(1-\frac 1 p\right)^6\left(1+\frac 6 p+\frac
    1{p^2}\right)\] and 
  \begin{equation}
    \label{eq:inf}
\omega_\infty =
  \int_{|t_2|,|t_2t_6t_7|,|t_7(t_6^3t_7+t_2^2)|,|t_6^2t_7|\le 1, 0 < t_6 \le
    1} \dd t_2 \dd t_6 \dd t_7.
  \end{equation}
\end{theorem}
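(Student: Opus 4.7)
The plan is to apply the universal-torsor method. First I would pass to the minimal desingularization $\pi \colon \tS \to S$ and note that, for a split quartic del Pezzo with an $\Afour$ singularity, $\rank \Pic(\tS) = 6$, in agreement with the predicted $(\log B)^5$ growth. From the Cox ring of $\tS$ one obtains a bijection between $U(\QQ)$ and a set of tuples of nonzero integers $\ee = (\e_0,\e_1^a,\e_1^b,\e_2^a,\e_2^b,\dots)$ satisfying torsor equations derived from \eqref{eq:surface} in Cox coordinates, together with coprimality constraints attached to pairs of non-intersecting prime divisors on $\tS$ (one per pair of non-adjacent vertices in the extended configuration of the four $\Afour$-exceptional curves and the three lines). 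Under this parametrisation, $H(x) \le B$ becomes a finite family of monomial inequalities in the $\e_j$ whose exponents encode the anticanonical class of $\tS$.

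Second, I would evaluate the resulting lattice-point count by M\"obius-inverting the coprimality conditions and summing the torsor variables one at a time. The innermost summation exploits a torsor equation as a congruence on the variable in which it appears linearly, producing a divisor-sum-like main contribution plus an error. Sums over ``bounded'' variables are then replaced by real integrals via Abel summation or lattice-point-to-volume estimates; the remaining ``free'' variables contribute the six logarithmic factors. The M\"obius series factorises over primes, and after recombination with the local densities one recovers the Euler product $\prod_p (1-1/p)^6(1+6/p+1/p^2)$, while the archimedean volume computation yields $\omega_\infty$ as in \eqref{eq:inf}. The rational prefactor $1/21600$ emerges from the index of the height-monomial sublattice in $\Pic(\tS)$, matching Peyre's predicted constant.

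The main obstacle is the error term $O(B(\log B)^{5-2/7})$. A na\"{\i}ve treatment would lose one factor of $\log B$ per intermediate summation, giving at best a saving of order $1/6$. To obtain $2/7$ one must exploit extra cancellation at the most delicate step, namely the one where a torsor equation imposes a non-trivial congruence on a ``middle-range'' variable. I would split this variable's range dyadically at an optimal threshold $B^\delta$, treat the small-range piece by elementary divisor estimates uniformly in the auxiliary parameters, and bound the large-range piece by trivial estimates on the M\"obius-weighted sum; optimising $\delta$ should balance the two contributions and yield precisely the exponent $2/7$. Once this uniform estimate is available, the remaining summations are routine and the identification of the leading constant with the one stated in the theorem is a bookkeeping matter.
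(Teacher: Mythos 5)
Your overall framework (universal torsor, bijection with coprimality conditions read off the configuration of negative curves, congruence on the linearly-appearing variable, then successive summation) is the right starting point, but the sketch is missing the two ideas that actually make this surface work, and the mechanism you propose for the exponent $2/7$ is not the one that succeeds.

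First, the order of summation over the two ``large'' variables cannot be fixed in advance. The torsor here is the hypersurface $\e_5\al_1+\e_1\al_2^2+\e_3\e_4^2\e_6^3\e_7=0$ in $\AA^9$, and after counting $(\al_1,\al_2)$ via the congruence modulo $\e_5$ one must sum the resulting main term over $\e_6$ and $\e_7$, both of which can be large. Summing over either one first produces an error term that is not summable over the remaining variables when the other is large. The proof therefore splits $N_{U,H}(B)$ into the ranges $\e_6\ge|\e_7|$ and $\e_6<|\e_7|$ and sums in opposite orders in the two cases, recombining the two main terms afterwards. Your single pass ``one variable at a time'' would stall at exactly this step. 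Relatedly, the threshold that produces $5-2/7$ is not a dyadic cut of a congruence variable at $B^\delta$: it is a cut of the $(\e_1,\dots,\e_5)$-sum at $\e_1^3\e_2^6\e_3^4\e_4^2\e_5^5=\lambda B$ with $\lambda=(\log B)^{30/7}$, balancing error contributions of sizes $\lambda^{1/6}B(\log B)^4$ and $\lambda^{-1}B(\log B)^9$; your heuristic of ``losing one $\log$ per summation'' does not describe what happens.

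Second, the identification of the constant is not bookkeeping, and $1/21600$ does not come from a lattice index. The effective cone of $\tS$ needs seven generators, not six, so the final summation over $\e_1,\dots,\e_5$ runs not over a simplex but over the region $\e_1^2\e_2^4\e_3^3\e_4^2\e_5^3\le B$, $\e_1^3\e_2^6\e_3^4\e_4^2\e_5^5>B$ (the complementary piece being shown negligible). The main term is consequently a \emph{difference} of two Tauberian main terms, giving $\alpha(\tS)=\vol(P_{(2,4,3,2,3)})-\vol(P_{(3,6,4,2,5)})=1/21600$. Any argument that treats the last summation as a single simplex count will produce the wrong constant.
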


It is easily checked that the surface $S$ is not toric, and we shall
see in Lemma~\ref{lem:A2} that it is not an equivariant compactification of
$\Ga^2$. Thus our result does not follow from the work of Tschinkel
and his collaborators \cite{MR1620682, clt}.

As the minimal desingularisation $\widetilde{X_d}$ of a split del Pezzo
surface $X_d\subset \PP^d$ of degree $d$ is the blow-up of $\Ptwo$ in $9-d$
points, it has Picard group $\Pic(\widetilde{X_d})\cong \ZZ^{10-d}$.
In the setting $d=4$, Manin's
conjecture \cite{f-m-t} therefore predicts that 
\begin{equation}
  \label{eq:manin-predict}
N_{U,H}(B) \sim \al(\tx) \omega_H(\tx) B(\log
B)^{5},
\end{equation}
as $B \rightarrow \infty$, where the exponent of $\log B$ is $\rank
\Pic(\tx)-1$. Moreover, the constants $\al(\tx)$ and $ \omega_H(\tx)$ are
those predicted by Peyre \cite{MR1340296}.  Note that the exponent of $\log B$
agrees with the statement of the theorem. We shall verify in
\S~\ref{sec:conjecture} that $c_{S,H}=\al(\tS) \omega_H(\tS)$ in this result.

An overview of progress relating to the Manin conjecture for arbitrary del
Pezzo surfaces can be found in the first author's survey \cite{gauss}.  The
present paper should be seen as a modest step on the path to its resolution
for the singular del Pezzo surfaces of degree $4$ that are split over $\QQ$.
According to the classification of such surfaces found in Coray and Tsfasman
\cite{c-t}, it transpires that there are $15$ possible singularity types for
split singular del Pezzo surfaces of degree $4$.  It follows from the work of
Batyrev and Tschinkel \cite{MR1620682}, la Bret\`eche and the first author
\cite{dp4-d5}, and the second author's joint work with Tschinkel
\cite{math.NT/0604193}, that the Manin conjecture is already known to hold for
$5$ explicit surfaces from this catalogue. In view of our theorem, which deals
with a surface of singularity type $\textbf{A}_4$, it remains to deal with the
split quartic del Pezzo surfaces that have singularity types
\begin{equation}
  \label{eq:anna-sit}
\text{$\textbf{A}_n$ for $n\in \{1,2,3\}$}, \quad 
3\textbf{A}_1, \quad 
\text{$\textbf{A}_1+\textbf{A}_n$ for $n\in \{1,2,3\}$}.
\end{equation}
Here one should note that there are two types of surfaces that have
singularity type $\textbf{A}_3$, one containing four lines and one containing
five lines. Similarly, there are two types that have $2\textbf{A}_1$
singularities.

The surface that we have chosen to focus on in the present investigation
satisfies the property that the cone of effective divisors associated to the
minimal desingularisation $\tS$ is not merely generated by the divisors that
form a basis for the Picard group $\Pic(\tS)$, but requires one further
divisor to generate it.  This leads to some additional considerations in the
proof, as we will see shortly.

The proof of the theorem uses a universal torsor.  For each split del Pezzo
surface of degree $d$, there is one (essentially unique) universal torsor,
which is always an open subset of a $(12-d)$-dimensional affine variety.  For
toric varieties, universal torsors are open subsets of affine space.
Salberger \cite{MR1679841} has shown how to establish Manin's conjecture using
universal torsors for split toric varieties defined over $\QQ$. As a step
towards handling non-toric del Pezzo surfaces that still have a relatively
simple universal torsor, the second author \cite{cox} has determined which del
Pezzo surfaces of degree at least 3 have a universal torsor that can be
described as a hypersurface in $\AA^{13-d}$.  Out of the singularity types in
\eqref{eq:anna-sit}, these include those surfaces of type $\Aone+\Atwo$,
$\Aone+\Athree$, $3\Aone$ and the $\Athree$ surface with five lines.  The
surfaces of type $\Dfive$ and $\Dfour$ considered in \cite{dp4-d5} and
\cite{math.NT/0604193} also belong to this class, as does the $\Afour$ surface
$S$ considered here.  In fact we will see in \S~\ref{sec:ut} that the
universal torsor for the present problem is an open subset of the hypersurface
  \begin{equation}
    \label{eq:ut}
\e_5\al_1 + \e_1\al_2^2 +
\e_3\e_4^2\e_6^3\e_7 = 0,
  \end{equation}
which is embedded in $\AA^9\cong \Spec \QQ[\e_1,\ldots,\e_7,\al_1,\al_2]$.
Note that one of the variables does not explicitly appear in the equation.

Our basic strategy is similar to the one used for the $\Dfive$ and $\Dfour$
quartic del Pezzo surfaces. The first step is to establish an explicit
bijection between the rational points outside the lines on $S$ and certain
integral points on the universal torsor. We adopt the approach of Tschinkel
and the second author \cite{math.NT/0604193} in order to obtain this bijection
in an elementary way, motivated by the structure of the minimal
desingularisation $\tS$ as a blow-up of $\Ptwo$ in five points.  The integral
points on the universal torsor are counted in \S~\ref{s:prelim}, using the
method developed by la Bret\`eche and the first author \cite{dp4-d5}.  The
torsor variables $\e_1, \dots, \e_7, \al_1, \al_2$ must satisfy \eqref{eq:ut},
together with certain coprimality and height conditions. The first step is to
fix the variables $\e_1, \dots, \e_7$ and to estimate the relevant number of
$\al_1, \al_2$ by viewing the equation as a congruence modulo $\e_5$. The
resulting estimate is then summed over the remaining variables.

The order in which we handle the remaining variables is crucial and subtle.
When it comes to summing over $\e_6$ and $\e_7$ we will run into trouble
controlling the overall contribution from the error term each time, because
both $\e_6$ and $\e_7$ can be rather big. Summing the number of $\al_1, \al_2$
over $\e_7$, for example, leads to an error term that we cannot estimate in a
way that is sufficiently small when summed over $\e_1, \dots, \e_5$ and large
values of $\e_6$.  In line with this we shall let the order of summation
depend on which of $\e_6$ or $\e_7$ has largest absolute value. When it comes
to summing the integral points on the universal torsor that satisfy $|\e_6|\ge
|\e_7|$, we sum first over $\e_6$ and then over $\e_7$. For the alternative
contribution we sum first over $\e_7$ and then over $\e_6$. This process leads
to two main terms that we put back together to get something of the general
shape
\begin{equation}
  \label{eq:MM}
M(\e_1, \dots, \e_5) := \omega_H(\tS)\cdot
\frac{B }{\e_1\e_2\e_3\e_4\e_5},
\end{equation}
where $\omega_H(\tS)$ is as in \eqref{eq:manin-predict}.
The final task is to sum this quantity over 
the remaining variables $\e_1, \dots, \e_5$.

While essentially routine, it is in this final analysis that a further
interesting feature of the proof of the theorem is
revealed.  For $\kk \in \ZZp^5$, define the simplex
\begin{equation}
  \label{eq:Pk}
P_\kk :=
\big\{(x_1, \dots, x_5) \in \RR^5 \mid x_i \ge 0,\quad k_1x_1+\dots+k_5x_5 \le
1\big\}, 
\end{equation}
whose volume is easily determined as
\[\vol(P_\kk) = \frac{1}{5!\cdot k_1\cdot k_2 \cdot k_3 \cdot k_4 \cdot
  k_5}.\] In \S~\ref{sec:conjecture} we will see that $\alpha(\tS) =
\vol(P_{(2,4,3,2,3)}) - \vol(P_{(3,6,4,2,5)})$, whence
\begin{equation}
  \label{eq:alpha}
  \alpha(\tS) 
= \frac{1}{5!\cdot 2\cdot 4\cdot 3\cdot 2\cdot 3}-\frac{1}{5!\cdot 3\cdot
    6\cdot 4\cdot 2\cdot 5}=\frac{1}{21600}.
\end{equation}
Returning to the summation of \eqref{eq:MM} over $\e_1,\ldots,\e_5 \in
\ZZp$, which is subject to $\e_1^2\e_2^4\e_3^3\e_4^2\e_5^3 \le B$,
it will transpire that there is a negligible contribution from those 
$\e_1,\ldots,\e_5$ for which 
$\e_1^3\e_2^6\e_3^4\e_4^2\e_5^5 > B$. 
Summing over the $\e_1,\ldots,\e_5 \in
\ZZp$ that are remaining therefore leads to the final main term
\[\big(\vol(P_{(2,4,3,2,3)}) - \vol(P_{(3,6,4,2,5)})\big)\cdot
\omega_H(\tS)B(\log B)^5,\]
as expected.  Thus the main term in 
the asymptotic formula is
really a difference of two main terms that conspire to give the
predicted value for $\al(\tS)$.
It would be interesting to see whether the same sort of phenomenon
occurs for other split del Pezzo surfaces of degree $4$, with
singularity type among the list \eqref{eq:anna-sit}.

\begin{ack} 
The authors are extremely grateful to the anonymous referee for his
careful reading of the manuscript and numerous helpful comments.
While working on this paper the first author was supported by EPSRC
grant number \texttt{EP/E053262/1}.
The second author was partially supported by a Feodor Lynen Research 
Fellowship of the Alexander von Humboldt Foundation. 
\end{ack}

\section{Calculation of Peyre's constant}\label{sec:conjecture}

In this section we wish to show that the value of the constant
$c_{S,H}$  obtained in our theorem is in
agreement with the prediction \eqref{eq:manin-predict} of Peyre
\cite{MR1340296}. Beginning with the value of $\omega_H(\tS)$, whose
precise definition we will not include here but which corresponds
to a product of local densities, we have
\begin{equation}\label{eq:om}
\omega_H(\tS)
= \omega_\infty \prod_p \Big(1 - \frac{1}{p}\Big)^6 \omega_p,
\end{equation}
where $\omega_\infty$ and $\omega_p$ are the real and $p$-adic
densities, respectively. The calculation of $\omega_p$ is routine and
leads to the conclusion that \[\omega_p= 1+\frac{6}{p}+\frac{1}{p^2}.\]
The reader is referred to \cite[\S~2]{dp4-d5} for an analogous calculation.
We now turn to the calculation of $\omega_\infty$, which needs to agree with
\eqref{eq:inf}.

Recall the equations \eqref{eq:surface} for the surface $S$, and write
$f_1(\xx)=x_0x_1-x_2x_3$ and $f_2(\xx) = x_0x_4+x_1x_2+x_3^2$.  To compute
$\omega_\infty$, we parametrise the points by writing $x_1,x_4$ as functions
of $x_0,x_2,x_3$. Thus we have
\[
x_1=\frac{x_2x_3}{x_0}, \quad
  x_4=-\frac{x_1x_2+x_3^2}{x_0}=-\frac{x_2^2x_3+x_0x_3^2}{x_0^2},
\]
and furthermore, 
\[\det
  \begin{pmatrix}
    \frac{\partial f_1}{\partial x_1} & \frac{\partial f_2}{\partial x_1}\\
    \frac{\partial f_1}{\partial x_4} & \frac{\partial f_2}{\partial x_4}
  \end{pmatrix}
  =\det
  \begin{pmatrix}
    x_0 & x_2 \\
    0 & x_0
  \end{pmatrix}
  =x_0^2.
\]
Since $\xx$ and $-\xx$ have the same image in $\Pfour$, we have
\begin{align*}
  \omega_\infty &= \frac 1 2
  \int_{|x_0|,\left|\frac{x_2x_3}{x_0}\right|,|x_2|,|x_3|,
    \left|\frac{x_2^2x_3+x_0x_3^2}{x_0^2}\right|\le 1} x_0^{-2} \dd x_0 \dd
  x_2 \dd x_3\\
  &= \frac 1 2
  \int_{|t_6|,|t_2t_6t_7|,|t_2|,|t_6^2t_7|,|t_2^2t_7+t_6^3t_7^2|\le 1} \dd t_2
  \dd t_6 \dd t_7,
\end{align*}
on carrying out the change of variables $x_0=t_6, x_2=t_2$ and $x_3=t_6^2t_7$.
But the range of integration is symmetric with respect to the transformation
$(t_2,t_6,t_7) \mapsto (t_2,-t_6,-t_7)$, and so we may restrict to the range
$t_6>0$. This therefore confirms the equality in \eqref{eq:inf}.

It remains to deal with the constant $\al(\tS)$ that appears in
\eqref{eq:manin-predict}.  As we've already commented, the Picard group
$\Pic(\tS)$ of $\tS$ has rank 6. Distinguished elements of $\Pic(\tS)$ are the
classes of irreducible curves with negative self intersection number. As
described in \S~\ref{sec:ut}, these are the classes of four exceptional
divisors $E_1, \dots, E_4$ coming from the $\Afour$-singularity of $S$ and the
transforms $E_5, E_6, E_7$ of the three lines on $S$.  By the work of the
second author \cite[\S~7]{cox}, $E_1, \dots, E_6$ form a basis of
$\Pic(\tS)$. In terms of this basis we have $E_7 = E_1+2E_2+E_3+2E_5-E_6$ and
$-K_\tS =2E_1+4E_2+3E_3+2E_4+3E_5+E_6$.

The convex cone in $\Pic(\tS)_\RR := \Pic(\tS) \otimes_\ZZ \RR$ generated by
classes of effective divisors is generated by $E_1, \dots, E_7$ (see
\cite[Theorem~3.10]{djt}). 
The intersection of its dual with the hyperplane
\[
\{x \in \Pic(\tS)_\RR \mid (x, -K_\tS) = 1\}
\] 
is a polytope $P$ whose volume is the constant $\alpha(\tS)$ defined
by Peyre \cite{MR1340296}. 
By definition 
\[
P = \left\{(x_1,\dots,x_6) \in \Pic(\tS)_\RR \mid
\begin{array}{l}
x_i \ge 0,\quad x_1+2x_2+x_3+2x_5-x_6\ge 0,\\
2x_1+4x_2+3x_3+2x_4+3x_5+x_6=1
\end{array}
\right\}.
\]
Eliminating the last coordinate shows that $P$ is isomorphic to 
\[
P' =
\left\{(x_1,\dots,x_5) \in \RR^5 \mid
\begin{array}{l}
  x_i \ge 0,\quad 2x_1+4x_2+3x_3+2x_4+3x_5 \le 1,\\
  3x_1+6x_2+4x_3+2x_4+5x_5 \ge 1
\end{array}\right\}.
\]
Analyzing the volume form with respect to which we must compute the volume of
$P$ in order to obtain $\alpha(\tS)$ (see \cite[Section~2]{djt}, for example),
we see that \[\alpha(\tS) = \vol(P')=\vol(P_{(2,4,3,2,3}) -
\vol(P_{(3,6,4,2,5)}),\] in the notation of \eqref{eq:Pk}.  This therefore
establishes \eqref{eq:alpha}.

An alternative approach to calculating $\alpha(\tS)$ is available to us
through recent work of Joyce, Teitler and the second author \cite{djt}. Recall
from \cite[Table~1]{alpha} that $\alpha(S_0) = 1/180$ for any non-singular
split del Pezzo surface $S_0$ of degree $4$. Since the order of the Weyl group
associated to the root system $\mathbf{A}_n$ is $(n+1)!$, as recorded in
\cite[Table~2]{djt}, so it follows from \cite[Theorem~1.3]{djt} that
\[\alpha(\tS)= \frac{1}{180} \cdot \frac{1}{5!} = \frac{1}{21600}.\]
This completes the verification that our theorem confirms the Manin conjecture
for the split $\Afour$ surface \eqref{eq:surface}.

\section{Arithmetic functions}\label{s:af}

In this section we present some elementary facts about certain
arithmetic functions and their average order, as required for our argument.
Define the multiplicative arithmetic functions
\begin{align*}
  \phis(n) := \prod_{p\mid n} \Big(1-\frac{1}{p}\Big), \quad 
  \phid(n) := \prod_{p\mid n} \Big(1+\frac{1}{p}\Big).
\end{align*}
Both of these functions have average order $O(1)$, and one has 
\begin{equation}
  \label{eq:phid}
  \sum_{n \le x} \frac{\phid(n)^j}{n} \ll_j \log x,
\end{equation}
for any $x>1$ and any $j\in\ZZp$.
To see this we note that $\phid(n)\leq \sum_{d\mid n} 1/d$, whence
\begin{align*}
  \sum_{n \le x} \frac{\phid(n)^j}{n} \leq \sum_{n\leq
  x}\frac{1}{n}\sum_{d_1,\ldots,d_j \mid n} \frac{1}{d_1\ldots d_j} 
\leq \sum_{d_1,\ldots,d_j=1}^\infty \frac{1}{d_1\ldots d_j
  [d_1,\ldots,d_j]} \sum_{e\leq x}\frac{1}{e},
\end{align*}
where $[d_1,\ldots,d_j]$ denotes the least common multiple of
$d_1,\ldots,d_j$. The required bound \eqref{eq:phid} then follows from
the estimate 
\[\sum_{d_1,\ldots,d_j=1}^\infty \frac{1}{d_1\ldots d_j [d_1,\ldots,d_j]} \leq
\sum_{d_1,\ldots,d_j=1}^\infty \frac{1}{(d_1\ldots d_j)^{1+1/j}} \ll_j 1.\]

For given positive integers $a,b$, our work will lead us to work with
the function 
\begin{equation}\label{eq:def_f}
  f_{a,b}(n) :=
  \begin{cases}
    \phis(n)/\phis(\gcd(n,a)), & \text{if $\cp n b$},\\
    0, & \text{if $\ncp n b$}.
  \end{cases}
\end{equation}
We begin by establishing the following result.

\begin{lemma}\label{lem:summation_f}
Let $I = [t_1,t_2]$, for $t_1<t_2$. 
Let $\alpha \in \ZZ$ such that $\cp \alpha q$.
Then we have
\[\sum_{\substack{n \in I \cap \ZZ\\\congr n \al q}} f_{a,b}(n) = \frac
  {t_2-t_1} q c_0 + O\big(2^{\omega(b)} \log|I|\big),\] 
where $|I| :=
2+\max\{|t_1|,|t_2|\}$ and 
\begin{equation}
  \label{eq:c0}
c_0 =
  \frac{\phis(b)}{\phis(\gcd(b,q))\zeta(2)} \prod_{p\mid abq}
  \Big(1-\frac{1}{p^2}\Big)^{-1}.
\end{equation}
\end{lemma}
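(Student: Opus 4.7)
The plan is to express $f_{a,b}(n)$ as a double M\"obius sum, reducing the problem to a standard count of integers in an arithmetic progression. Since $\phis$ is multiplicative with $\phis(p^k) = 1 - 1/p$ for all $k \ge 1$, one has $\phis(n) = \sum_{d\mid n}\mu(d)/d$; restricting this M\"obius inversion to divisors coprime to $a$ gives
\[
\frac{\phis(n)}{\phis(\gcd(n,a))} = \prod_{\substack{p\mid n\\ p\nmid a}}\Big(1-\frac{1}{p}\Big) = \sum_{\substack{d\mid n\\ \cp d a}}\frac{\mu(d)}{d}.
\]
Detecting $\cp n b$ by the identity $\sum_{e\mid \gcd(n,b)}\mu(e)$ then yields
\[
f_{a,b}(n) = \sum_{\substack{e\mid b\\ e\mid n}}\mu(e)\sum_{\substack{d\mid n\\ \cp d a}}\frac{\mu(d)}{d}.
\]

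Next I would substitute this expansion into the sum of the lemma and swap the orders of summation. The inner quantity becomes the number of $n \in I\cap\ZZ$ with $\congr n \al q$ and $\mathrm{lcm}(d,e)\mid n$. Since $\cp \al q$, the Chinese remainder theorem shows that this count vanishes unless $\cp{\mathrm{lcm}(d,e)}{q}$ (equivalently $\cp d q$ and $\cp e q$), in which case it equals $(t_2-t_1)/(q\,\mathrm{lcm}(d,e)) + O(1)$. The divisibility $d\mid n$ with $n \ne 0$ truncates the $d$-sum at $d\le |I|$, and the $O(1)$ errors contribute at most
\[
\sum_{e\mid b}\sum_{\substack{d\le |I|\\ d\text{ squarefree}}}\frac{1}{d} \ll 2^{\omega(b)}\log|I|,
\]
which is exactly the error term claimed in the lemma. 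Re-extending the truncated $d$-sum in the main term back to infinity introduces a tail bounded by $(t_2-t_1)\, q^{-1} |I|^{-1}\cdot 2^{\omega(b)} \ll 2^{\omega(b)}$, which is absorbed into the same error.

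The main term thus reduces to $(t_2-t_1)/q$ times
\[
\Sigma := \sum_{\substack{e\mid b\\ \cp e q}}\mu(e)\sum_{\substack{d\ge 1\\ \cp d{aq}}}\frac{\mu(d)\gcd(d,e)}{d^2\,e},
\]
after using $\mathrm{lcm}(d,e) = de/\gcd(d,e)$; the main obstacle of the proof is the identification $\Sigma = c_0$. By multiplicativity $\Sigma$ factors as an Euler product $\prod_p F_p$, and a direct case analysis on the relation of $p$ to $a$, $b$, $q$ yields $F_p = 1$ when $p\mid q$ or when $p\mid a$ and $p\nmid bq$, $F_p = 1 - 1/p$ when $p\mid b$ and $p\nmid q$ (regardless of whether $p\mid a$), and $F_p = 1 - 1/p^2$ when $p\nmid abq$. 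On the other hand, the defining expression \eqref{eq:c0} rearranges as
\[
c_0 = \prod_{\substack{p\mid b\\ p\nmid q}}\Big(1-\frac{1}{p}\Big)\prod_{p\nmid abq}\Big(1-\frac{1}{p^2}\Big),
\]
so the Euler product for $\Sigma$ matches $c_0$ factor by factor, as required.
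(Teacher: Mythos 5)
Your proof is correct and follows essentially the same route as the paper: Möbius expansion of $f_{a,b}$, swapping the order of summation to count integers in arithmetic progressions with an $O(1)$ error per term, truncating and re-extending the divisor sum, and evaluating the resulting Euler product to recover $c_0$. The only cosmetic difference is that you keep two Möbius variables $(d,e)$ throughout, whereas the paper first computes the single Dirichlet convolution $(f_{a,b}*\mu)(n)=\mu(n)\gcd(b,n)/|n|$ (supported on $\gcd(a,n)\mid b$) in closed form and then convolves with $1$; both bookkeepings lead to the same error bound $O(2^{\omega(b)}\log|I|)$ and the same local factors.
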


\begin{proof}
We will follow the convention that $\mu(-n)=\mu(n)$ and $\mu(0)=0$.
We begin by calculating the Dirichlet convolution 
\[(f_{a,b}*\mu)(n)=\sum_{d\mid n}f_{a,b}(d)\mu(n/d)=
\prod_{\substack{p^\nu\|n\\\nu \ge 1}} \big(f_{a,b}(p^{\nu})-
f_{a,b}(p^{\nu-1})\big).\]
It is clear that $f_{a,b}(1)=1$ and 
\begin{equation*}
  f_{a,b}(p^{j})= f_{a,b}(p)=\left\{
    \begin{array}{ll}
      1-1/p, & \text{if $p\nmid ab$,}\\
      1, & \text{if $p\nmid b$ and $p\mid a$,}\\
      0, & \text{if $p\mid b$,}
    \end{array}
  \right.
\end{equation*}
for any $j\geq 1$. Hence it follows that 
\begin{equation*}
(f_{a,b}*\mu)(n)=
\left\{
\begin{array}{ll}
  \mu(n)\gcd(b,n)/|n|, & \text{if $\gcd(a,n)\mid b$,}\\ 
  0, & \text{otherwise.}
\end{array}
\right.
\end{equation*}
In particular
\begin{equation*}
  \sum_{n\leq N}|(f_{a,b}*\mu)(n)| \leq 
  \sum_{n\leq N} \frac{\gcd(b,n)|\mu(n)|}{|n|} 
  \ll  2^{\omega(b)}\log N,
\end{equation*}
for any $N>1$. 
Since $f_{a,b}=(f_{a,b}*\mu)*1$, we therefore deduce that 
\begin{align*}
\sum_{\substack{n \in I \cap \ZZ\\\congr n \al q}} f_{a,b}(n)
&=\sum_{\substack{d=1\\\gcd(d,q)=1}}^\infty (f_{a,b}*\mu)(d) 
\sum_{\substack{m\in d^{-1}I\cap \ZZ\\\congr {md} \al q}}1\\
&=\frac{t_2-t_1}{q}\sum_{\substack{d=1\\\gcd(d,q)=1}}^{\infty}
\frac{(f_{a,b}*\mu)(d)}{d}
+O\big(2^{\omega(b)}\log |I|\big).
\end{align*}
Here we have observed that the outer sum in the first line is really a sum
over $d\leq |I|$, making the previous bound applicable for dealing with the
error term. We have then extended the summation over $d$ to infinity, with
acceptable error.  Finally, it remains to observe that
\begin{align*}
\sum_{\substack{d=1\\\gcd(d,q)=1}}^{\infty}
\frac{(f_{a,b}*\mu)(d)}{d}=
\prod_{p}\Big(1-\frac{1}{p^2}\Big)
\prod_{p\mid abq}\Big(1-\frac{1}{p^2}\Big)^{-1}\prod_{\substack{p\mid
  b\\ p\nmid q}}\Big(1-\frac{1}{p}\Big)=
c_0,
\end{align*}
as required to complete the proof of the lemma.
\end{proof}

Rather than Lemma~\ref{lem:summation_f}, we will actually need a
corresponding estimate in which the summand is replaced by 
$f_{a,b}(n)g(n),$ for suitable real-valued functions $g$. 
This is supplied for us by the following result.

\begin{lemma}\label{lem:summation_f_g}
Let $I = [t_1,t_2]$, for $t_1<t_2$,
and let $g : I \to \RR$ be any function such
that $g$ has a continuous derivative on $I$ 
which changes its sign only $R_g(I)< \infty$ times on $I$.
Let $\alpha \in \ZZ$ such that $\cp \alpha q$.
Then we have
\[
\sum_{\substack{n \in I \cap \ZZ\\\congr n \al q}}
  f_{a,b}(n)g(n)
= \frac {c_0}q \int_I g(t) \dd t +
  O\big(2^{\omega(b)}\cdot (\log|I|)\cdot M_I(g)\big),\] 
with $c_0$ given by \eqref{eq:c0} and 
$M_I(g) := (1+R_g(I)) \cdot \sup_{t \in I}|g(t)|.$
\end{lemma}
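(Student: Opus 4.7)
The natural approach is to deduce this from Lemma~\ref{lem:summation_f} by partial summation, treating $g$ as a smooth weight. First, for $t \in I$ I would define the truncated partial sum
\[S(t) := \sum_{\substack{n \in [t_1,t] \cap \ZZ\\\congr n \al q}} f_{a,b}(n),\]
and apply Lemma~\ref{lem:summation_f} to the subinterval $[t_1,t]$ to obtain
\[S(t) = \frac{(t-t_1)c_0}{q} + E(t), \qquad E(t) \ll 2^{\omega(b)}\log|I|,\]
uniformly in $t \in I$ (noting that $\log|[t_1,t]| \le \log|I|$).

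Next, since $g$ is continuously differentiable, Abel summation (Stieltjes integration against $\dd S$) yields
\[\sum_{\substack{n \in I \cap \ZZ\\\congr n \al q}} f_{a,b}(n)g(n) = S(t_2)g(t_2) - \int_{t_1}^{t_2} S(t)\,g'(t)\dd t.\]
Substituting the asymptotic for $S(t)$, the main-term contribution is
\[\frac{c_0}{q}\left((t_2-t_1)g(t_2) - \int_{t_1}^{t_2}(t-t_1)g'(t)\dd t\right),\]
which, after one integration by parts in the remaining integral, collapses exactly to $\frac{c_0}{q}\int_I g(t)\dd t$, as desired.

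It remains to bound the contribution of the error $E(t)$. The boundary term $E(t_2)g(t_2)$ is immediately $O\bigl(2^{\omega(b)}(\log|I|)\sup_{t\in I}|g(t)|\bigr)$. For the integral $\int_{t_1}^{t_2} E(t)g'(t)\dd t$, the essential point is the hypothesis on sign changes of $g'$: the interval $I$ decomposes into at most $R_g(I)+1$ subintervals on each of which $g'$ has constant sign, so that $\int |g'|\dd t$ telescopes into a sum of differences of boundary values of $g$. This gives
\[\int_I |g'(t)|\dd t \le 2\bigl(1+R_g(I)\bigr)\sup_{t\in I}|g(t)|,\]
and hence the error is $O\bigl(2^{\omega(b)}(\log|I|)M_I(g)\bigr)$, matching the statement.

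I do not anticipate a genuine obstacle here: the argument is a standard partial summation, and the only mildly non-routine ingredient is the total-variation estimate for $g'$ extracted from the sign-change hypothesis, which is precisely what converts the error term of Lemma~\ref{lem:summation_f} into the quantity $M_I(g)$ appearing in the conclusion.
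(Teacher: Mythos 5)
Your proposal is correct and follows essentially the same route as the paper: partial (Abel) summation against the counting function from Lemma~\ref{lem:summation_f}, followed by bounding $\int_I|g'|$ via the decomposition of $I$ into at most $1+R_g(I)$ subintervals where $g'$ has constant sign. The only difference is the cosmetic choice of anchoring the partial sums at $t_1$ rather than using $M(t)=\sum_{n\le t}$, which changes nothing of substance.
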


\begin{proof}
  Let $S$ denote the sum that is to be estimated, and write 
\[M(t):=\sum_{\substack{n \le t\\
      \congr n \al q}} f_{a,b}(n),
\] 
for any $t>0$. By partial summation, 
\[S =
  M(t_2)g(t_2)-M(t_1)g(t_1) - \int_{t_1}^{t_2} M(t)g'(t) \dd t.\] 
An application of Lemma~\ref{lem:summation_f} reveals that 
$M(t) = c_0  t/ q + O(2^{\omega(b)} \log(2+|t|))$. Hence partial
integration yields 
\[S = \frac{c_0}{q} \int_I g(t) \dd t
  + O\big(2^{\omega(b)} \cdot (\log |I|)\cdot (|g(t_2)|+|g(t_1)|+\int_{t_1}^{t_2}
  |g'(t)|\dd t)\big).
\]
  Splitting $I$ into the $R_g$ intervals where $g'$ has constant sign
therefore completes the proof of the lemma.
\end{proof}

\section{The universal torsor}\label{sec:ut}

The purpose of this section is to establish a completely explicit bijection
between the rational points on the open subset $U$ of our $\mathbf{A}_4$
quartic del Pezzo surface $S$, and the integral points on the universal torsor
above $\tS$ which are subject to a number of coprimality conditions.  In doing
so we shall follow the strategy of the second author's joint work with
Tschinkel \cite{math.NT/0604193}.

Along the way we will introduce new variables $\eta_1,\ldots,\eta_7$
and $\al_1,\al_2$.  It will be convenient to henceforth write
\begin{equation}
  \label{eq:notat}
\ee=(\e_1,\dots,\e_5),\quad 
\ee'=(\e_1,\dots,\e_7),
\quad \aa =
(\al_1,\al_2).
\end{equation}
Furthermore, we will make frequent use of the notation 
\begin{equation}
  \label{eq:notat'}
\fbase{k_1}{k_2}{k_3}{k_4}{k_5} := \prod_{i=1}^5 \e_i^{k_i},  
\end{equation}
for any $(k_1,\dots,k_5) \in \QQ^5$.

In order to derive the bijection alluded to above, we must begin by collecting
together some useful information about the geometric structure of $S$, as
defined by equations~\eqref{eq:surface}.  By computing the Segre symbol of
$S$, the definition of which can be found in Hodge and Pedoe \cite{MR13:972c},
we see that $S$ contains exactly one singularity. This has type $\Afour$ and
is easily determined as $p=(0:0:0:0:1)$.  By the classification of singular
quartic del Pezzo surfaces found in Coray and Tsfasman
\cite[Proposition~6.1]{c-t}, $S$ contains exactly three lines.  Let us call
these lines $E''_5$, $E''_6$ and $E''_7$, where $E''_5$ and $E''_6$ intersect
in the singularity $p$, and $E''_7$ intersects $E''_6$ outside $p$. We easily
determine these lines as $E''_5=\{x_0=x_2=x_3=0\}$, $E''_6 =
\{x_0=x_1=x_3=0\}$ and $E''_7 = \{x_1=x_3=x_4=0\}$.

The projection $\xx \mapsto (x_1:x_3:x_4)$ from $E''_7$ is a birational map
$\phi: S \rto \Ptwo$, which maps 
\[
U:=S \setminus (E''_5 \cup E''_6 \cup E''_7) =
\{(x_0:\ldots:x_4)\in S \mid x_3 \ne 0\}
\] 
isomorphically to 
\[
\{(\al_2:\e_5:\al_1) \in
\Ptwo \mid \e_5 \ne 0, \al_1\e_5+\al_2^2 \ne 0\} \subset \Ptwo.
\] 
The inverse map is $\psi: \Ptwo \rto S$ given by
\begin{equation}
  \label{eq:map_psi}
  \psi: (\al_2:\e_5:\al_1) \mapsto (\e_5^3 : \al_2\e_7 : \al_2\e_5^2 : \e_5\e_7 :
\al_1\e_7),
\end{equation}
where $\e_7=-(\al_1\e_5+\al_2^2)$.

By \cite[Proposition~6.1, Diagram~12]{c-t}, blowing up the singularity
$p$ leads to a minimal desingularisation $\pi_0:\tS \to S$ containing four
$(-2)$-curves $E_1, \ldots, E_4$ (the four exceptional divisors obtained by
blowing up $p$) and three $(-1)$-curves $E_5, E_6, E_7$ (the strict transforms
of the lines $E_5'', E_6'', E_7''$ on $S$). The configuration of these $(-1)$-
and $(-2)$-curves on $\tS$ is described by
Figure~\ref{fig:dynkin},
where the number of edges between two curves is the intersection number, and
self intersection numbers are given as upper indices.
The divisors $A_1, A_2$ will be introduced momentarily.

\begin{figure}[ht]
  \centering
  \[\xymatrix{\SI{A_1}{1} \ar@{-}[rrrr] \ar@{-}[dd] \ar@{=}[dr] & & & & \SI{E_5}{-1} \ar@{-}[dr]\\
    & \SI{E_7}{-1} \ar@{-}[r] & \SI{E_6}{-1} \ar@{-}[r] & \ex{\SI{E_4}{-2}} \ar@{-}[r] & \ex{\SI{E_3}{-2}} \ar@{-}[r] & \ex{\SI{E_2}{-2}} \ar@{-}[dl]\\
    \SI{A_2}{0} \ar@{-}[rrrr] \ar@{-}[ur] & & & & \ex{\SI{E_1}{-2}}}\]
  \caption{Configuration of curves on $\tS$.}
  \label{fig:dynkin}
\end{figure}
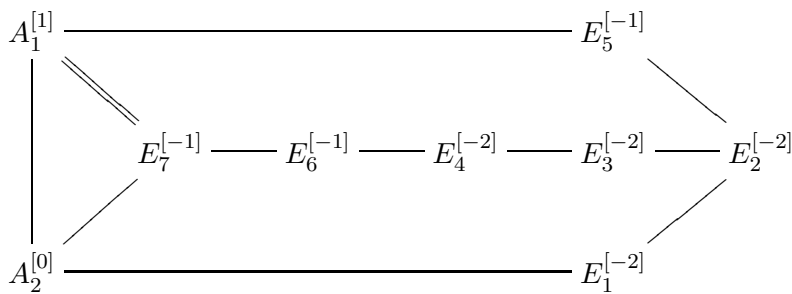

The surface $\tS$ is a blow-up $\pi: \tS \to \Ptwo$ in five points. While
there are several ways to construct $\tS$ as such a blow-up of $\Ptwo$, we
describe a map $\pi$ that is compatible with the map $\phi: S \rto \Ptwo$ in
the sense that $\phi \circ \pi_0: \tS \to S \rto \Ptwo$ coincides with $\pi$
where it is defined.  Such a map $\pi : \tS \to \Ptwo$ is obtained by
contracting $E_6, E_4, E_3, E_2, E_1$ on $\tS$ in this order. We choose the
same coordinates $(\al_2: \e_5: \al_1)$ on $\Ptwo$ as before. Then $\pi$ maps
$E_1, E_2, E_3, E_4, E_6$ to $(0:0:1)$. Furthermore, $E_7$ is the strict
transform of $E'_7=\{\e_7= -(\al_1\e_5+\al_2^2) = 0\} \subset \Ptwo$ and $E_5$
is the strict transform of $E'_5=\{\e_5=0\} \subset \Ptwo$ under $\pi$.

To describe which points on $\Ptwo$ we must blow up in order to recover $\tS$,
we introduce $A'_1=\{\al_1=0\} \subset \Ptwo$ and $A'_2=\{\al_2=0\} \subset
\Ptwo$. We note that its strict transforms $A_1, A_2$ under $\pi$ on $\tS$
intersect $E_1, \dots, E_7$ as described by Figure~\ref{fig:dynkin}, where
$A_1, A_2, E_7$ meet in one point which maps under $\pi_0$ to $(1:0:0:0:0) \in
S$. Given $E'_5, E'_7, A'_1, A'_2 \subset \Ptwo$ as above, we may now perform
the following sequence of five blow-ups to obtain $\tS$:
\begin{itemize}
\item blow up the intersection of $E_5, E_7, A_2$ to obtain $E_1$;
\item blow up the intersection of $E_1, E_5, E_7$ to obtain $E_2$;
\item blow up the intersection of $E_2, E_7$ to obtain $E_3$;
\item blow up the intersection of $E_3, E_7$ to obtain $E_4$;
\item blow up the intersection of $E_4, E_7$ to obtain $E_6$.
\end{itemize}
Here we have renamed $E'_i$ to $E_i$ and $A'_j$ to $A_j$, and we have
used the same names for a divisor and its strict
transform in each blow-up in the sequence. 
We proceed to establish the claim made in \S~\ref{sec:intro}.

\begin{lemma}\label{lem:A2}
  The surface $S$ is not an equivariant compactification of $\Ga^2$.
\end{lemma}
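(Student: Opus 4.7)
Suppose for contradiction that $S$ is an equivariant compactification of $\Ga^2$. Since the minimal desingularisation $\pi_0 : \tS \to S$ is canonical, the action lifts uniquely to $\tS$, making $\tS$ itself an equivariant compactification of $\Ga^2$. Its open orbit $U_0 \cong \AA^2$ contains no complete curves, so every negative curve of $\tS$—in particular each of $E_1,\dots,E_7$—lies in the complement of $U_0$.

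From Figure~\ref{fig:dynkin}, the intersection graph of the $E_i$ is a tree consisting of the chain $E_1 - E_2 - E_3 - E_4 - E_6 - E_7$ with $E_5$ attached at the branching vertex $E_2$, and it admits no non-trivial graph automorphism preserving the self-intersections. As $\Ga^2$ is connected, it must therefore fix each $E_i$ individually. Consequently, the contraction $\pi : \tS \to \Ptwo$ that collapses $E_6, E_4, E_3, E_2, E_1$ in sequence is $\Ga^2$-equivariant, and the action descends to a regular $\Ga^2$-action on $\Ptwo$ preserving both the line $E_5' = \{\eta_5 = 0\}$ and the smooth conic $E_7' = \{\alpha_1\eta_5 + \alpha_2^2 = 0\}$.

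The strategy concludes by showing that no such $\Ga^2$-action on $\Ptwo$ can exist. By the Hassett--Tschinkel classification, the faithful $\Ga^2$-compactifications of $\Ptwo$ come in two types, corresponding to the local Artinian algebras $\QQ[x,y]/(x,y)^2$ and $\QQ[x]/(x^3)$; a routine expansion of a general ternary quadratic in either coordinate system shows that every invariant conic is a double line, never smooth. Hence the descended $\Ga^2$-action on $\Ptwo$ has a one-dimensional kernel $K$. Since $K$ acts trivially on the dense open subset $\tS \setminus \pi^{-1}(\text{centres})$, it acts trivially on the whole of $\tS$ by separatedness, so the $\Ga^2$-action on $\tS$ factors through the one-dimensional quotient $\Ga^2/K$. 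This forces the open orbit $U_0$ to have dimension at most one, contradicting $U_0 \cong \AA^2$.

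The main obstacle will be verifying that the contraction sequence defining $\pi$ is honestly $\Ga^2$-equivariant at each stage, which reduces to showing that the unique configuration of $\Ga^2$-invariant curves present on each intermediate surface pins down the centre of the next blow-up. Once this is in place, the conic-invariance computation on $\Ptwo$ and the triviality of the kernel's lift are straightforward.
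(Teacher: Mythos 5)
Your proposal is correct and follows essentially the same route as the paper: reduce to a $\Ga^2$-action on $\Ptwo$ via the equivariant map to $\Ptwo$, then rule it out because neither of the two Hassett--Tschinkel $\Ga^2$-structures on $\Ptwo$ leaves the irreducible conic $E_7'$ invariant. The paper simply asserts the equivariance of $\phi$ and implicitly that the induced action is one of the two faithful structures, whereas you supply the omitted details (invariance of the negative curves on $\tS$ --- which is best justified by their rigidity together with the connectedness of $\Ga^2$, rather than by $\AA^2$ containing no complete curves --- equivariance of the contraction, and the kernel argument excluding a non-faithful descent).
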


\begin{proof}
  To establish the lemma we assume for a contradiction that $S$ is of this
  type and apply the work of Hassett and Tschinkel \cite{h-t}.  If $S$ is an
  equivariant compactification of $\Ga^2$ then the map $\phi: S \rto \Ptwo$
  has to be $\Ga^2$-equivariant, resulting in an action of $\Ga^2$ on $\Ptwo$
  which leaves $E_7'=\{\e_7 = -(\al_1\e_5+\al_2^2)=0\}$ invariant.  However,
  we can check that the two distinct $\Ga^2$-structures on $\Ptwo$ (see
  \cite[Proposition~3.2]{h-t}) do not leave any irreducible quadric curve
  invariant.
\end{proof}

We are now ready to derive the promised bijection between $U(\QQ)$ and integral
points on the universal torsor lying above $\tS$. 
The map $\psi$ given by \eqref{eq:map_psi} 
induces a bijection 
\[
\psi_0 : (\al_1, \al_2, \e_5, \e_7)
\mapsto (\e_5^3, \al_2\e_7, \al_2\e_5^2, \e_5\e_7,
\al_1\e_7)
\] 
between 
\[\{(\aa, \e_5, \e_7) \in
\ZZ^2 \times \ZZp \times \ZZnz \mid
\al_1\e_5+\al_2^2+\e_7=0, \hcf(\al_1, \al_2, \e_5)=1\}\] and
\[
U(\QQ) = \{(x_0:\ldots:x_4) \in S(\QQ) \mid x_3 \ne 0\} \subset
S(\QQ).
\] 
Note that \[H(\psi_0(\al_1, \al_2, \e_5, \e_7)) = \frac{\max_{0\leq
    i\leq  4}|\psi_0(\al_1, \al_2, \e_5,
  \e_7)_i|}{\hcf(\{\psi_0(\al_1, \al_2, \e_5, \e_7)_i
  \mid 0\leq i\leq 4\})}.
\] 
Motivated by the sequence of
blow-ups above, we introduce new
variables 
\begin{equation*}
\begin{array}{lll}
\e_1:=\hcf(\al_2, \e_5, \e_7),
&\e_2:=\hcf(\e_1, \e_5, \e_7), &\e_3:=\hcf(\e_2,
\e_7), \\
\e_4:=\hcf(\e_3, \e_7),&\e_6:=\hcf(\e_4,\e_7), &
\end{array}
\end{equation*}
and in each step transform and rename the previous variables
accordingly.

Observe that this gives a bijection
\[(\ee', \aa) \mapsto
(\fbase 2 4 3 2
3 \e_6, \fbase 1 1 1 1 0 \e_6\e_7\al_2, \fbase 2 3 2 1 2 \al_2, \fbase 1
2 2 2 1 \e_6^2\e_7, \e_7\al_1),\]
which we call $\Psi$, between 
\[\TT := \left\{(\ee', \aa) \in
  \ZZp^6 \times \ZZnz \times \ZZ^2 \bigg{|}
\begin{aligned}
  &\e_5\al_1 + \e_1\al_2^2 +
\e_3\e_4^2\e_6^3\e_7 = 0\\ &\text{coprimality conditions hold}
\end{aligned}\right\}\]
and $U(\QQ)$. The coprimality conditions are described by the extended Dynkin
diagram of $E_1, \dots, E_7, A_1, A_2$ in Figure~\ref{fig:dynkin}, following
the rule that any of the variables $\eta_i,\al_j$ are coprime if and only if
there is no line connecting the divisors $E_i, A_j$ in the Dynkin
diagram. Once taken in conjunction with the equation
\begin{equation*}
T(\ee',\aa)=\e_5\al_1 + \e_1\al_2^2 +
\e_3\e_4^2\e_6^3\e_7 = 0,
\end{equation*}
that is satisfied by the elements of $\TT$, it is easily checked that
the coprimality conditions can be rewritten as
\begin{align}
  \label{eq:cpal1} &\cp{\al_1}{\e_2\e_6} \\
  \label{eq:cpal2} &\cp{\al_2}{\e_2\e_3\e_4} \\
  \label{eq:cpe6} &\cp{\e_6}{\e_1\e_2\e_3\e_5} \\
  \label{eq:cpe7} &\cp{\e_7}{\e_1\e_2\e_3\e_4\e_5} \\
  \label{eq:cpe} &\cp{\e_1}{\e_3\e_4\e_5},\ \cp{\e_2}{\e_4},\
  \cp{\e_5}{\e_3\e_4}.
\end{align}
In particular it follows that 
$H(\Psi(\ee',\aa)) = \max_{0\leq i\leq 4}|\Psi(\ee',\aa)_i|,$ 
since the five coordinates of
$\Psi(\ee',\aa)$ are necessarily coprime for $(\ee',\aa) \in \TT$.
The height conditions may therefore be written as
\begin{equation}
  \label{eq:anna-clap}
\max\Big\{ \begin{array}{l}
|\fbase 2 4 3 2 3 \e_6|, |\fbase 1 1 1 1 0 \e_6\e_7\al_2|, \\
|\fbase 2 3 2 1 2 \al_2|, |\fbase 1
2 2 2 1 \e_6^2\e_7|, |\e_7\al_1|
\end{array}
\Big\} \leq B.
\end{equation}

The equation $T(\ee',\aa)= 0$ is an embedding of the universal torsor over
$\tS$ in $\AA^{9}$.  Our argument so far has given us a parametrisation of
rational points of bounded height in the complement $U$ of the lines in
$S$. This will play a pivotal role in our proof of the theorem.

\section{The main argument}\label{s:prelim}

In this section we give an overview of the proof of 
the theorem, and make our final preparations for its proof. 
Recall the notation introduced in \eqref{eq:notat} and
\eqref{eq:notat'} for $\ee, \aa$ and
$\fbase{k_1}{k_2}{k_3}{k_4}{k_5}$. 
We define the quantities 
\begin{align*}
    Y_0&:=\left(\frac{\fbase 2 4 3 2 3}{B}\right)^{1/5},\\
    Y_2&:=\left(\frac B{\fbase{2}{-1}{-2}{-3}{-2}}\right)^{1/5},\\
    Y_6&:=Y_0^{-1},\\
    Y_7&:=\left(\frac B{\fbase{-3}{-6}{-2}{2}{-7}}\right)^{1/5},
\end{align*}
which clearly depend only on $\ee$ and $B$.  Using the equation
$T(\ee',\aa)=0$, a little thought reveals that we may write the height
condition \eqref{eq:anna-clap} as
\begin{align}
  \label{eq:H0} &|Y_0^4(\e_6/Y_6)| \le 1, \\
  \label{eq:H1} &|Y_0^2(\e_6/Y_6)(\e_7/Y_7)(\al_2/Y_2)| \le 1,\\
  \label{eq:H2} &|Y_0^4(\al_2/Y_2)| \le 1 ,\\
  \label{eq:H3} &|Y_0^2(\e_6/Y_6)^2(\e_7/Y_7)| \le 1,\\
  \label{eq:H4} &|(\e_7/Y_7)((\e_6/Y_6)^3(\e_7/Y_7)+Y_0^2(\al_2/Y_2)^2)|
  \le 1,
\end{align}
with $\e_1, \dots, \e_6 > 0$.  For example, eliminating $\al_1$ from
$|\e_7\al_1|\le B$ using $T(\ee',\aa)=0$ gives \eqref{eq:H4}.  It follows from
the contents of \S~\ref{sec:ut} that $N_{U,H}(B)$ is equal to the number of
$(\ee', \aa) \in \ZZp^6 \times \ZZnz \times \ZZ^2$ such that \eqref{eq:ut}
holds, with \eqref{eq:cpal1}--\eqref{eq:cpe} and \eqref{eq:H0}--\eqref{eq:H4}
all holding.  As indicated in the introduction it will be necessary to follow
different arguments according to which of $\eta_6$ or $|\eta_7|$ is biggest in
the summation over the variables $\ee'$. Accordingly, we write $N_a(B)$ for
the overall contribution to $N_{U,H}(B)$ from $(\ee', \aa)$ such that
\begin{equation}
  \label{eq:Ta}
\eta_6\geq  |\eta_7|,
\end{equation}
and $N_b(B)$ for the remaining contribution from $(\ee', \aa)$ such that 
\begin{equation}
  \label{eq:Tb}
\eta_6< |\eta_7|.
\end{equation}
These quantities will be estimated in \S~\ref{s:a} and \S~\ref{s:b},
respectively.

Let us now recall the broad outlines of our approach to estimating $N_{a}(B)$
and $N_{b}(B)$, as discussed in \S~\ref{sec:intro}. Thus the idea is to view
the torsor equation \eqref{eq:ut} as a congruence modulo $\e_5$, in order to
take care of the summation over the variable $\al_1$. In \S~\ref{s:a12} we
shall use this strategy to count the total number of permissible
$\aa=(\al_1,\al_2)$.  This will lead to a preliminary estimate for both
$N_{a}(B)$ and $N_{b}(B)$, since it will make no difference whether
\eqref{eq:Ta} or \eqref{eq:Tb} holds.  It will then remain to sum this
estimate over all of the remaining variables $\ee'$.  We will estimate the
overall contribution from the error term in \S~\ref{s:a12}. For the treatment
of the main term, however, we will need to treat the cases in which
\eqref{eq:Ta} or \eqref{eq:Tb} holds differently.  In estimating $N_{a}(B)$,
we will sum the main term over $\eta_6$ and then over $\eta_7$. This will be
undertaken \S~\ref{s:a}. Alternatively, to estimate $N_{b}(B)$, we will sum
the main term over $\eta_7$ and then over $\eta_6$.  This will be the object
of \S~\ref{s:b}. Finally, in \S~\ref{s:final} we will recombine our estimates
and sum over the remaining variables $\ee=(\eta_1,\ldots,\eta_5)$.

\subsection{Real-valued functions}

In estimating $N_a(B)$ and $N_{b}(B)$ we will meet a number of real-valued
functions, whose basic properties it will be crucial to understand.  Let
\begin{equation}\label{eq:h}
h(t_0,t_2,t_6,t_7):=\max
\{
|t_0^4t_6|,|t_0^2t_2t_6t_7|,|t_0^4t_2|,|t_0^2t_6^2t_7|,|t_7(t_6^3t_7+t_0^2t_2^2)|
\}
\end{equation}
Bearing this notation in mind, one notes that the height conditions in 
 \eqref{eq:H0}--\eqref{eq:H4} are equivalent to $h(Y_0, \al_2/Y_2,
\e_6/Y_6, \e_7/Y_7) \le 1$.  Finally, it is easy to see that 
  \[
\omega_\infty = \int_{h(1,t_2,t_6,t_7) \le 1, t_6>0} \dd t_2 \dd t_6
  \dd t_7,\]
where $\omega_\infty$ is given by \eqref{eq:inf}.

We define the real-valued functions
\begin{align}
  \label{eq:g0} g_0(t_0,t_6,t_7)&:= \int_{h(t_0,t_2,t_6,t_7) \le 1} 1 \dd t_2\\
  \label{eq:g1g} \gonea(t_0,t_7;\ee; B)&:= \int_{Y_6t_6 \ge
    |Y_7t_7|, t_6>0} g_0(t_0,t_6,t_7) \dd t_6\\
  \label{eq:g1l} \goneb(t_0,t_6;\ee; B)&:= \int_{|Y_7t_7|>\max\{Y_6t_6,1\}}
  g_0(t_0,t_6,t_7) \dd t_7\\
  \label{eq:g2g}
  \begin{split}
    \gtwoa(t_0;\ee; B)&:= \int_{h(t_0,t_2,t_6,t_7) \le 1, Y_6t_6 \ge
      |Y_7t_7|>1} \dd t_2 \dd t_6 \dd t_7\\&= \int_{|t_7|>1/Y_7}
    \gonea(t_0,t_7;\ee; B) \dd t_7
  \end{split}\\
  \label{eq:g2l}
  \begin{split}
    \gtwob(t_0;\ee; B)&:= \int_{h(t_0,t_2,t_6,t_7) \le 1, |Y_7t_7| >
      \max\{Y_6t_6,1\}, t_6>0} \dd t_2 \dd t_6 \dd t_7 \\&=
    \int_0^\infty \goneb(t_0,t_6;\ee; B) \dd t_6
  \end{split}
\end{align}
We clearly have 
\begin{equation}
  \label{eq:anna-fly}
\begin{split}
\gtwo(t_0;\ee;B)&:=\gtwoa(t_0;\ee;B)+\gtwob(t_0;\ee;B)  \\
&=
\int_{h(t_0,t_2,t_6,t_7) \le 1, |Y_7t_7|>1, t_6>0} \dd t_2 \dd t_6 \dd t_7.
\end{split}
\end{equation}
Finally, we define \[G_2(t_0):= \int_{h(t_0,t_2,t_6,t_7) \le 1, t_6>0} \dd t_2
\dd t_6 \dd t_7.\] The function $G_2:\RR_{>0}\rightarrow \RR$ is intimately
related to the real density $\omega_\infty$, as the following result shows.

\begin{lemma}\label{lem:omega_infty}
We have \[G_2(t_0) = \frac{\omega_\infty}{t_0^2}.\]
\end{lemma}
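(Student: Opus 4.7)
The plan is to reduce $G_2(t_0)$ to $\omega_\infty$ via a scaling change of variables in the $(t_2,t_6,t_7)$-integral that absorbs every power of $t_0$ appearing inside $h$. We look for exponents $a,b,c$ such that the substitution $t_2=u_2/t_0^{a}$, $t_6=u_6/t_0^{b}$, $t_7=u_7 t_0^{c}$ turns $h(t_0,t_2,t_6,t_7)$ into $h(1,u_2,u_6,u_7)$. Matching the monomials $t_0^4 t_6$ and $t_0^4 t_2$ forces $b=a=4$, and matching $t_0^2 t_6^2 t_7$ then forces $c=6$. The remaining terms $t_0^2 t_2 t_6 t_7$ and $t_7(t_6^3 t_7+t_0^2 t_2^2)$ must be checked to also lose their $t_0$-dependence under these exponents; a straightforward computation shows both powers of $t_0$ cancel, so with this choice we indeed have $h(t_0,u_2/t_0^4,u_6/t_0^4,u_7 t_0^6)=h(1,u_2,u_6,u_7)$.

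Next I would compute the Jacobian of the substitution, which is the product $t_0^{-4}\cdot t_0^{-4}\cdot t_0^{6}=t_0^{-2}$, and note that since $t_0>0$ the constraint $t_6>0$ translates into $u_6>0$. Substituting into the definition of $G_2(t_0)$, the integrand becomes the characteristic function of $\{h(1,u_2,u_6,u_7)\le 1,\ u_6>0\}$ multiplied by the Jacobian $t_0^{-2}$, so
\[
G_2(t_0)=t_0^{-2}\int_{h(1,u_2,u_6,u_7)\le 1,\ u_6>0} \dd u_2 \dd u_6 \dd u_7.
\]
Finally, comparing with the observation made just before the lemma that $\omega_\infty$ equals precisely this last integral, the claim $G_2(t_0)=\omega_\infty/t_0^2$ follows.

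There is essentially no obstacle here: the only thing to be careful about is verifying that all five monomials comprising $h$ scale consistently under the same choice of $(a,b,c)=(4,4,-6)$. Since this is a single short check and the Jacobian is elementary, the proof is a few lines.
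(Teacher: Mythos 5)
Your proof is correct and is essentially identical to the paper's: the same substitution $t_2=T_2t_0^{-4}$, $t_6=T_6t_0^{-4}$, $t_7=T_7t_0^{6}$ with Jacobian $t_0^{-2}$, followed by the identification of the resulting integral with $\omega_\infty$. The only blemish is the final parenthetical ``$(a,b,c)=(4,4,-6)$'', which contradicts the sign convention you set up (where $c=6$), but this does not affect the argument.
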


\begin{proof}
This result follows on making the  change of variables 
\[t_2 = T_2t_0^{-4},\quad t_6 =
  T_6t_0^{-4},\quad t_7 = T_7t_0^{6}.
\] 
Under this transformation one therefore obtains 
\[G_2(t_0) = \frac 1 {t_0^2}
  \int_{h(t_0,T_2t_0^{-4},T_6t_0^{-4},T_7t_0^6)\le 1, T_6>0} \dd T_2 \dd T_6 \dd
  T_7,
\] 
where $h(t_0,T_2t_0^{-4},T_6t_0^{-4},T_7t_0^6)=h(1,T_2,T_6,T_7)$ is
independent of $t_0$.
\end{proof}

During the course of our main argument it will be absolutely critical
to control the size of the functions 
\eqref{eq:g0}--\eqref{eq:g1l}, as $t_0,t_6,t_7$ vary.
We may and shall assume that $t_0,t_6,|t_7|$ take only positive values.

\begin{lemma}\label{lem:bound_g}
Let $\ee\in \ZZp^5$ be given. Then the following
hold:
\begin{enumerate}
\item\label{it:bound_gzero} $g_0(t_0,t_6,t_7) \ll \frac{1}{t_0|t_7|^{1/2}}$.
\item\label{it:bound_gonea} $\gonea(t_0,t_7; \ee;B) \le
    \int_0^\infty g_0(t_0,t_6,t_7) \dd t_6 \ll\min\big\{
    \frac{1}{t_0|t_7|^{7/6}}, \frac{1}{t_0^8}\big\}$.
\item\label{it:bound_goneb} $\goneb(t_0,t_6; \ee;B) \le
    \int_{-\infty}^\infty g_0(t_0,t_6,t_7) \dd t_7 \ll
    \frac{1}{t_0t_6^{3/4}}$.
  \end{enumerate}
\end{lemma}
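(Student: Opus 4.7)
The plan is to extract all three estimates from the fifth component of $h$, namely the curved condition $|t_7(t_6^3 t_7 + t_0^2 t_2^2)| \le 1$ from~\eqref{eq:h}, combined in parts (2) and (3) with suitable changes of variables that linearise it. For part~(1), this condition forces $t_6^3 t_7 + t_0^2 t_2^2$ into an interval of length $2/|t_7|$, and since $t_0^2 t_2^2 \ge 0$ the variable $t_2^2$ must lie in an interval of length at most $2/(t_0^2 |t_7|)$. Consequently the admissible $t_2$ form a set of measure $\ll 1/(t_0 |t_7|^{1/2})$, as required.

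For part~(2), the bound $\ll 1/t_0^8$ follows directly from $|t_2| \le 1/t_0^4$ (supplied by $|t_0^4 t_2| \le 1$) combined with $t_6 \le 1/t_0^4$ (supplied by $|t_0^4 t_6| \le 1$). For the sharper bound I restrict to $t_2, t_6 > 0$ (doubling to account for $t_2 < 0$) and pass to $v_1 = t_0^2 t_2^2$, $v_2 = t_6^3$, so that $dt_2\,dt_6 = dv_1\,dv_2/(6 t_0 v_1^{1/2} v_2^{2/3})$, and condition~(5) becomes the linear strip $|v_1 t_7 + v_2 t_7^2| \le 1$ in the first quadrant. A further rescaling $v_1 = x/|t_7|$, $v_2 = y/t_7^2$ trivialises the strip to the $|t_7|$-free region $\{x,y \ge 0 : |x \pm y| \le 1\}$ while extracting a factor $|t_7|^{-7/6}$, reducing the claim to finiteness of $\iint \frac{dx\,dy}{x^{1/2} y^{2/3}}$ over that region: the axis singularities $x^{-1/2}, y^{-2/3}$ are integrable, and on the unbounded strip $|x - y| \le 1$ the integrand decays like $x^{-7/6}$ at infinity.

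For part~(3), the same idea works with $t_6$ fixed and $t_7$ free: set $u = t_0^2 t_2^2/t_6^{3/2}$ and $\tau = t_6^{3/2} t_7$, which yields $dt_2\,dt_7 = du\,d\tau/(2 t_0 t_6^{3/4} u^{1/2})$ after doubling in $t_2$, while condition~(5) becomes the $t_6$-free constraint $|\tau(\tau + u)| \le 1$. Completing the square $w = \tau + u/2$ rewrites this as $w^2 \in [u^2/4 - 1, u^2/4 + 1]$, so the measure of admissible $w$ is $\ll \min\{u+2, 1/u\}$ (essentially constant for $u \le 2$ and of order $1/u$ for $u > 2$). The remaining $(u,w)$ integral $\int_0^\infty u^{-1/2} \min\{u+2, 1/u\}\, du$ splits as $\int_0^2 (u+2) u^{-1/2}\, du + \int_2^\infty u^{-3/2}\, du$, both convergent, giving the claimed $1/(t_0 t_6^{3/4})$. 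The main obstacle throughout is identifying the correct scalings; the exponents $7/6$ and $3/4$ are dictated by the quadratic--cubic shape of~(5) balanced against the Jacobian weights $v_1^{1/2}$ and $v_2^{2/3}$, and they cannot be recovered by crude $L^\infty$ arguments.
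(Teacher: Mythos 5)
Your argument is correct, but it is organised quite differently from the paper's. The paper isolates the same key constraint $|t_6^3t_7^2+t_0^2t_2^2t_7|\le 1$ and then splits into the two cases $|t_6^3t_7^2|\le 2$ and $|t_6^3t_7^2|>2$: in the first case it reads off the pointwise bounds $t_2\ll 1/(t_0|t_7|^{1/2})$, $t_6\ll |t_7|^{-2/3}$, $t_7\ll t_6^{-3/2}$ directly, and in the second it uses the interval-length argument for $t_2^2$ to get $g_0(t_0,t_6,t_7)\ll 1/(t_0t_6^{3/2}|t_7|^{3/2})$, finally integrating these pointwise bounds over the explicit ranges $t_6>2^{1/3}|t_7|^{-2/3}$ and $|t_7|>2^{1/2}t_6^{-3/2}$ to obtain parts (2) and (3). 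You avoid the case split entirely: your part (1) observes once and for all that confining $t_2^2$ to an interval of length $L$ confines $t_2$ to a set of measure $\le 2\sqrt{L}$, and your parts (2) and (3) replace the paper's two-case integration by substitutions ($v_1=t_0^2t_2^2$, $v_2=t_6^3$ followed by rescaling in powers of $|t_7|$; respectively $u=t_0^2t_2^2t_6^{-3/2}$, $\tau=t_6^{3/2}t_7$) that strip out the exact powers $|t_7|^{-7/6}$ and $t_6^{-3/4}$ and reduce the claims to the convergence of two fixed model integrals. I have checked the Jacobians and the convergence of those model integrals --- in particular the $x^{-7/6}$ decay along the strip $|x-y|\le 1$ and the $\min\{u+2,1/u\}$ measure bound after completing the square --- and they are all in order, as is the easy $O(t_0^{-8})$ bound, which you prove exactly as the paper does. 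Your route is a little heavier to set up but has the merit of making the exponents $7/6$ and $3/4$ transparent as scaling weights; the paper's is more elementary but relies on the ad hoc threshold $|t_6^3t_7^2|=2$.
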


\begin{proof}
Recall the definition \eqref{eq:h} of $h$.
The upper bound $O(t_0^{-8})$ that appears in~\eqref{it:bound_gonea}
is easy. Indeed, it follows from 
  the inequality $h(t_0,t_2,t_6,t_7) \le 1$ that $|t_2| \le 1/t_0^4$
  and $|t_6| \le 1/t_0^4$.

  For the remaining statements, we distinguish the case $|t_6^3t_7^2| \le 2$
  and its opposite. Note that the inequality $h(t_0,t_2,t_6,t_7) \le 1$
  implies
  \begin{equation}
    \label{eq:anna-cries}
|t_6^3t_7^2+t_0^2t_2^2t_7| \le 1.
  \end{equation}
  Let us begin with the first case, in which case $|t_0^2t_2^2t_7| \le
  3$. We therefore obtain
\[
t_2 \ll \frac{1}{t_0|t_7|^{1/2}}, \quad t_6 \ll \frac{1}{|t_7|^{2/3}},
  \quad t_7 \ll \frac{1}{t_6^{3/2}}.
\] 
The first of these inequalities implies statement~\eqref{it:bound_gzero}, the
first and second imply the first bound in statement~\eqref{it:bound_gonea},
and finally, integrating the bound for $g_0(t_0,t_6,t_7)$ from
statement~\eqref{it:bound_gzero} over $ t_7 \ll 1/t_6^{3/2}$ gives
statement~\eqref{it:bound_goneb}.

In the second case $|t_6^3t_7^2| > 2$, the inequality \eqref{eq:anna-cries}
implies $t_7 < 0$ and
\[\frac{t_6^3t_7^2-1}{t_0^2|t_7|} \le t_2^2 \le
\frac{t_6^3t_7^2+1}{t_0^2|t_7|}.\] Note that the condition $\sqrt{x} \le t_2
\le \sqrt{x+y}$ describes an interval for $t_2$ of length
$O(y/x^{1/2})$. Here, $x = (t_6^3t_7^2-1)/(t_0^2|t_7|) \ge
t_6^3|t_7|/(2t_0^2)$ and $y=2/(t_0^2|t_7|)$, whence
\[g_0(t_0,t_6,t_7) \ll \frac{1}{t_0t_6^{3/2}|t_7|^{3/2}}.\] The inequality
$t_6 > 2^{1/3}/|t_7|^{2/3}$ implies statement~\eqref{it:bound_gzero} and
integrating over $t_6 > 2^{1/3}/|t_7|^{2/3}$ results in the first bound in
statement~\eqref{it:bound_gonea}. Finally, integrating over $|t_7| >
2^{1/2}/t_6^{3/2}$ gives statement~\eqref{it:bound_goneb}.
\end{proof}

\subsection{Estimating $N_a(B)$ and $N_b(B)$ --- first step}\label{s:a12}

We are now ready to begin our estimation of $N_a(B)$ and $N_{b}(B)$ in
earnest.  In what follows, we always have $\e_1, \dots, \e_6 \in \ZZp$ and
$\e_7 \in \ZZnz$.
 
For fixed $\ee'=(\ee, \e_6, \e_7)$ subject to 
the coprimality conditions \eqref{eq:cpe6},
\eqref{eq:cpe7} and \eqref{eq:cpe}, 
we let $\Nzero:=\Nzero(\ee';B)$ be the total number of $\al_1,
\al_2\in\ZZ$ which satisfy the equation \eqref{eq:ut},  subject to 
$h(Y_0,\al_2/Y_2,\e_6/Y_6,\e_7/Y_7) \le 1$
and the coprimality conditions \eqref{eq:cpal1} and \eqref{eq:cpal2}.
Employing a M\"obius inversion for \eqref{eq:cpal1}, we obtain
\begin{equation*}
  \Nzero 
  =\sum_{k_1\mid \e_2\e_6} \mu(k_1)\#\left\{\al_2 \mid
\begin{array}{l}
\congr{\al_2^2\e_1}{-\e_3\e_4^2\e_6^3\e_7}{k_1\e_5},\\ 
h(Y_0, \al_2/Y_2,\e_6/Y_6, \e_7/Y_7) \le 1,\\ 
\text{\eqref{eq:cpal2} holds}
\end{array}
\right\}.
\end{equation*}
It is easy to see that the summand vanishes unless $\cp{k_1}{\e_1\e_3\e_4}$.
Indeed, if $p\mid k_1,\e_1$ then $p\mid \e_1,\e_3\e_4\e_6\e_7$, which is
forbidden, and furthermore, if $p\mid k_1,\e_3\e_4$ then $p\mid
\e_3\e_4,\al_2\e_1$, which is also forbidden.

Let $k_1$ be a squarefree divisor of $\e_2\e_6$. Since
$\cp{\e_2}{\e_6}$, we can write $k_1=k_{12}k_{16}$ with $k_{12}\mid\e_2$
and $k_{16}\mid\e_6$. Furthermore such a representation is
unique. Writing $\e_6=k_{16}\e_6'$ we therefore obtain
\[
\Nzero=
\sum_{\substack{k_{16}\mid \e_6, k_{12}\mid \e_2\\\cp{k_{12}k_{16}}{\e_1\e_3\e_4}}}
\mu(k_{12})\mu(k_{16})N_0(k_{12},k_{16})
\]
where
\[
N_0(k_{12},k_{16}):=
\#\left\{\al_2 \mid
\begin{array}{l}
\congr{\al_2^2\e_1}{-\e_3\e_4^2k_{16}^3\e_6'^3\e_7}{k_{12}k_{16}\e_5},\\ 
h(Y_0, \al_2/Y_2,\e_6/Y_6, \e_7/Y_7) \le 1,\\ 
\text{\eqref{eq:cpal2} holds}
\end{array}
\right\}.
\]
In view of the congruence we have $k_{16}\mid \al_2^2\e_1$, whence $k_{16}
\mid \al_2$ since $\cp{k_{16}}{\e_1}$ and $k_{16}$ is squarefree. Writing
$\al_2=k_{16}\al_2'$, we divide through the congruence by $k_{16}$ to obtain
\[
\congr{\al_2'^2k_{16}\e_1}{-\e_3\e_4^2k_{16}^2\e_6'^3\e_7}{k_{12}\e_5}.
\]
Using the relation $\cp{\e_6}{\e_2\e_5}$, we see that 
$\cp{k_{16}}{k_{12}\e_5}$, whence we can remove a further factor
of $k_{16}$ in this congruence.  It therefore follows that 
\[
N_0(k_{12},k_{16})=
\#\left\{\al_2' \mid
\begin{array}{l}
\congr{\al_2'^2\e_1}{-\e_3\e_4^2k_{16}\e_6'^3\e_7}{k_{12}\e_5},\\
h(Y_0, \al_2'k_{16}/Y_2, \e_6/Y_6, \e_7/Y_7) \le 1,\\
\cp{\al_2'}{\e_2\e_3\e_4}
\end{array}
\right\},
\]
since $\cp{k_{16}}{\e_2\e_3\e_4}$.

Note that $\cp{k_{12}\e_5}{\e_1}$ and
$\cp{k_{12}\e_5}{\e_3\e_4^2k_{16}\e_6'^3\e_7}$. It therefore follows
that for each $\al'_2$ satisfying the congruence, there is a unique $1 \le \rho
\le k_{12}\e_5$, with 
\begin{equation}
  \label{eq:anna}
\cp{\rho}{k_{12}\e_5}, \quad
\congr{\rho^2\e_1}{-\e_3\e_6\e_7}{k_{12}\e_5}, \quad
\end{equation}
such that
\[
\congr{\al_2'}{\rho\e_4\e_6'}{k_{12}\e_5}.
\] 
Thus we obtain
\[
N_0(k_{12},k_{16})=
\sum_{\substack{1 \le \rho \le k_{12}\e_5\\\text{\scriptsize{\eqref{eq:anna} holds}}}}
\#\left\{\al_2' \mid
\begin{array}{l}
\congr{\al_2'}{\rho\e_4\e_6'}{k_{12}\e_5},\\
h(Y_0, \al_2'k_{16}/Y_2, \e_6/Y_6, \e_7/Y_7) \le 1,\\
\cp{\al_2'}{\e_2\e_3\e_4}
\end{array}
\right\}.
\]
We remove $\cp{\al_2'}{\e_2\e_3\e_4}$ by a further application of
M\"obius inversion. Writing $\al_2'=k_2\al_2''$, we see that 
$N_0(k_{12},k_{16})$ is equal to 
\begin{equation*}
  \sum_{\substack{1 \le \rho \le k_{12}\e_5\\\text{\scriptsize{\eqref{eq:anna}
          holds}}}}
  \sum_{k_2\mid \e_2\e_3\e_4}\mu(k_2)
  \#\left\{\al_2'' \mid
    \begin{array}{l}
      \congr{k_2\al_2''}{\rho\e_4\e_6'}{k_{12}\e_5},\\
      h(Y_0, \al_2''k_{16}k_2/Y_2, \e_6/Y_6, \e_7/Y_7) \le 1
    \end{array}
  \right\}.
\end{equation*}
The summand vanished unless $\cp{k_2}{k_{12}\e_5}$, since $p\mid k_2,
k_{12}\e_5$ implies $p\mid k_{12}\e_5, \rho\e_4\e_6'$, which is
forbidden. Thus we may restrict our summation over $k_2$ to
$\cp{k_2}{k_{12}\e_5}$, and it therefore follows that the number of available
$\al_2''$ is \[\frac{Y_2}{k_{12}k_{16}k_2\e_5}g_0(Y_0, \e_6/Y_6,
\e_7/Y_7)+O(1),\] where $g_0$ is given by \eqref{eq:g0}. Recall the definition
of the function $\phi^*$ from \S~\ref{s:af}.  We are now ready to establish
the following result.

\begin{lemma}\label{lem:a1a2}
We have
\[\Nzero = \frac{Y_2}{\e_5}g_0(Y_0,
\e_6/Y_6, \e_7/Y_7) \thzero(\ee, \e_6, \e_7) + O(\Rzero(\ee, \e_6,
\e_7;B))\] with
\[
\thzero(\ee,\e_6,\e_7) :=
    \frac{\phis(\e_6)\phis(\e_2\e_3\e_4)}{\phis(\gcd(\e_6,\e_4))}
\hspace{-0.5cm}
\sum_{\substack{k_{12}\mid\e_2\\\cp{k_{12}}{\e_1\e_3\e_4}}} 
\hspace{-0.5cm}
    \frac{\mu(k_{12})}{k_{12}\phis(\gcd(\e_2,k_{12}\e_5))}
\sum_{\substack{1 \le \rho \le
    k_{12}\e_5\\\text{\scriptsize{\eqref{eq:anna} holds}}}}
\hspace{-0.2cm}
1,
\]
and \[\sum_{\ee, \e_6,\e_7} \Rzero(\ee, \e_6, \e_7;B)\ll B(\log B)^3.\] 
\end{lemma}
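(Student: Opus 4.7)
The proof reduces to two routine-but-delicate tasks already set up by the preceding discussion: collecting the three rounds of M\"obius inversion into the closed form $\thzero$, and controlling the aggregate $O(1)$ errors. To extract the main term, I would substitute $\frac{Y_2}{k_{12}k_{16}k_2\e_5}g_0(Y_0,\e_6/Y_6,\e_7/Y_7)+O(1)$ for the count of $\al_2''$ into the nested M\"obius sum and evaluate the outer summations in turn. The sum over $k_2\mid\e_2\e_3\e_4$ with $\cp{k_2}{k_{12}\e_5}$ produces $\phis(\e_2\e_3\e_4)/\phis(\gcd(\e_2\e_3\e_4,k_{12}\e_5))$, and the $\gcd$ collapses to $\gcd(\e_2,k_{12}\e_5)$ upon using $\cp{\e_5}{\e_3\e_4}$ from \eqref{eq:cpe} together with $k_{12}\mid\e_2$. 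The sum over $k_{16}\mid\e_6$ with $\cp{k_{16}}{\e_1\e_3\e_4}$ similarly produces $\phis(\e_6)/\phis(\gcd(\e_6,\e_4))$ after absorbing the automatic condition $\cp{\e_6}{\e_1\e_3}$ from \eqref{eq:cpe6}. Leaving the $k_{12}$-sum and the innermost $\rho$-sum intact reproduces the stated $\thzero$.

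For the error term, the accumulated $O(1)$ contributions give a pointwise bound
\[\Rzero(\ee,\e_6,\e_7;B)\ll\tau(\e_6)\,\tau(\e_2)\,\tau(\e_2\e_3\e_4)\cdot R(\ee),\]
where $R(\ee)$ denotes the number of $\rho\in[1,k_{12}\e_5]$ satisfying the quadratic congruence \eqref{eq:anna}; this is controlled by $C^{\omega(\e_2\e_5)}$ for an absolute constant $C$, via the standard count of square roots modulo $k_{12}\e_5$ (after trivially summing a factor $2^{\omega(k_{12})}$ over $k_{12}\mid\e_2$). To sum against the target $B(\log B)^3$, I would exploit the range $|\e_7|\ll B/(\e_1\e_2^2\e_3^2\e_4^2\e_5\e_6^2)$ implied by \eqref{eq:H3}, summing $\e_7$ first; then the sum $\sum_{\e_6}\tau(\e_6)/\e_6^2=O(1)$ disposes of $\e_6$; the remaining sum of $B/(\e_1\e_2^2\e_3^2\e_4^2\e_5)$ against the divisor weights over $\e_1,\ldots,\e_5$ yields $O(1)$ contributions from $\e_2,\e_3,\e_4$, whereas $\sum 1/\e_1\ll\log B$ and $\sum 2^{\omega(\e_5)}/\e_5\ll(\log B)^2$ combine to give the three logarithms.

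The main obstacle is precisely this final logarithmic accounting. The divisor-function factors picked up from the three rounds of M\"obius inversion, together with the quadratic-residue count, must each be absorbed against a convergent $\e_i^{-2}$-weight, so that only $\e_1$ and $\e_5$ contribute divergent logarithms; any over-estimate of $R(\ee)$ or a crude application of the height inequality \eqref{eq:H3} would inflate the logarithmic cost beyond the target $(\log B)^3$.
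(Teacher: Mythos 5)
Your proposal follows essentially the same route as the paper: the same closed-form evaluation of the $k_2$- and $k_{16}$-sums (using $\cp{\e_3\e_4}{k_{12}\e_5}$ and $\cp{\e_6}{\e_1\e_3}$ to collapse the gcd's), the same bound $\Rzero\ll$ (divisor weights)$\times$(square-root count mod $k_{12}\e_5$), and the same order of summation for the error --- $\e_7$ first via $|\fbase 1 2 2 2 1\e_6^2\e_7|\le B$, convergent $\e_i^{-2}$-sums over $\e_2,\e_3,\e_4,\e_6$, and logarithms only from $\e_1$ and $\e_5$. This is correct and matches the paper's argument.
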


The final statement in Lemma \ref{lem:a1a2} should be taken to mean that the
overall contribution from the error term in the asymptotic formula for
$\Nzero$, once summed over all of the available $\ee,\e_6,\e_7$, is $ O(B(\log
B)^3)$. What is crucial here is that the exponent of $\log B$ is strictly
smaller than $5$, so that this truly is an acceptable error term from the
point of view of the main theorem. In the case of Lemma \ref{lem:a1a2} we need
to sum $\Rzero(\ee, \e_6, \e_7;B)$ over all $\ee,\e_6,\e_7$ which satisfy the
height conditions \eqref{eq:H0}--\eqref{eq:H4}, and the coprimality conditions
\eqref{eq:cpe6}--\eqref{eq:cpe}. In the arguments to follow there will be
several points at which the overall contribution from various error terms
needs to be estimated. In each case we will not stress the precise conditions
on the variables to be summed over, these being invariably self-evident.

\begin{proof}[Proof of Lemma \ref{lem:a1a2}]
Tracing through our argument above, it follows that 
  \[
\Nzero =  \frac{Y_2}{\e_5}g_0(Y_0,
  \e_6/Y_6, \e_7/Y_7) \thzero(\ee, \e_6, \e_7)+O(\Rzero(\ee, \e_6, \e_7;B)),\] 
with 
\[\thzero =
\sum_{\substack{k_{16}\mid \e_6, k_{12}\mid
    \e_2\\\cp{k_{12}k_{16}}{\e_1\e_3\e_4}}}
\frac{\mu(k_{12})\mu(k_{16})}{k_{12}k_{16}} \sum_{\substack{1 \le \rho \le
    k_{12}\e_5\\\text{\scriptsize{\eqref{eq:anna} holds}}}}
\sum_{\substack{k_2\mid
    \e_2\e_3\e_4\\\cp{k_2}{k_{12}\e_5}}}\frac{\mu(k_2)}{k_2},\] and
\begin{align*}
  \Rzero(\ee, \e_6, \e_7;B) &\ll 2^{\omega(\e_2\e_3\e_4)+\omega(\eta_6)}
  \sum_{k_{12}\mid \e_2} |\mu(k_{12})| \sum_{\substack{1 \le \rho \le
      k_{12}\e_5\\\text{\scriptsize{\eqref{eq:anna}
          holds}}}} 1\\
  &\ll 2^{\omega(\e_2)+\omega(\e_2\e_5)} 2^{\omega(\e_2\e_3\e_4)+\omega(\eta_6)} \\
  &\leq
  8^{\omega(\e_2)}2^{\omega(\e_3)+\omega(\e_4)+\omega(\e_5)+\omega(\e_6)}.
\end{align*}
We have used here the fact that the congruence in \eqref{eq:anna} has at most
$2^{\omega(k_{12}\e_5)}\leq 2^{\omega(\e_2\e_5)}$ solutions $\rho$ modulo
$k_{12}\e_5$.

On noting that $\cp{\e_6}{\e_1\e_3}$ and
$\cp{\e_3\e_4}{k_{12}\e_5}$, we deduce that 
\begin{align*}
\thzero
&=
\sum_{\substack{k_{12}\mid \e_2\\\cp{k_{12}}{\e_1\e_3\e_4}}}
\frac{\mu(k_{12})}{k_{12}} \frac{\phi^*(\e_6)}{\phi^*(\gcd(\e_6,\e_4))}
\frac{\phi^*(\e_2\e_3\e_4)}{\phi^*(\gcd(\e_2,k_{12}\e_5))}
\sum_{\substack{1 \le \rho \le k_{12}\e_5\\\text{\scriptsize{\eqref{eq:anna} holds}}}}1.
\end{align*}
This completes the proof of the main term in the lemma.

To show that $\Rzero(\ee, \e_6, \e_7;B)$ makes a satisfactory contribution
once it is summed over all $\ee,\e_6,\e_7$ satisfying the height conditions in
\eqref{eq:anna-clap}, we begin by summing over $\e_7$. Thus it follows that
\[
  \begin{split}
    \sum_{\ee,\e_6,\e_7}\Rzero(\ee, \e_6, \e_7;B) 
    &\ll \sum_{\ee, \e_6} \frac
    {8^{\omega(\e_2)}2^{\omega(\e_3)+\omega(\e_4)+\omega(\e_5)+\omega(\e_6)}B}
    {\fbase 1 2 2 2 1\e_6^2}\\ 
&\ll B(\log B)^3,
  \end{split}
  \]
as required to complete the proof of the lemma.
\end{proof}

\subsection{Estimating $N_a(B)$ --- second  step}\label{s:a}

In this section our task is to sum the main term in Lemma
\ref{lem:a1a2} over all of the relevant $\e_6$ and $\eta_7$, such that
\eqref{eq:Ta} holds. As we've already indicated, we will begin by
summing over the $\e_6$. 
For fixed $\ee, \e_7$ satisfying the coprimality conditions
\eqref{eq:cpe7} and  \eqref{eq:cpe}, define $\Nonea:=\Nonea(\ee,
\e_7;B)$ to be the sum of the main term in Lemma~\ref{lem:a1a2} over
all $\e_6\in \ZZp$ such that  
the coprimality condition \eqref{eq:cpe6} holds, and furthermore, 
$\e_6 \ge |\e_7|$.

We begin by noting that it is possible to remove $\e_5$ from \eqref{eq:cpe6},
replacing this coprimality condition by $\cp{\e_6}{\e_1\e_2\e_3}$. Indeed, if
$p\mid \e_6,\e_5$ then \eqref{eq:anna} implies that we must have $p\mid
\rho^2\e_1$, which is forbidden.  Since $\cp{\e_3\e_7}{k_{12}\e_5}$, so there
exists a unique integer $\beta\in [1, k_{12}\e_5]$ such
that \[\congr{\rho^2\e_1}{-\e_3\e_7\beta}{k_{12}\e_5}.\] It therefore follows
that
\[
\Nonea = \frac{Y_2}{\e_5}\phis(\e_2\e_3\e_4)
\sum_{\substack{k_{12}\mid \e_2\\\cp{k_{12}}{\e_1\e_3\e_4}}}
\frac{\mu(k_{12})}{k_{12}\phis(\gcd(\e_2,k_{12}\e_5))} \sum_{\substack{1 \le
    \rho \le k_{12}\e_5\\\gcd(\rho,k_{12}\e_5)=1}} A,
\] 
where 
\begin{equation*}
  \begin{split}
    A&=\sum_{\substack{\e_6 \in \ZZp \\
        \e_6 \ge |\e_7|\\ \congr {\e_6}\beta{k_{12}\e_5}}}
    f_{\e_4,\e_1\e_2\e_3}(\e_6)g_0(Y_0,\e_6/Y_6,\e_7/Y_7).
  \end{split}
\end{equation*}
Here $f_{\e_4,\e_1\e_2\e_3}$ is given by \eqref{eq:def_f}.  
Since $g_0(Y_0,\e_6/Y_6,\e_7/Y_7)=0$ for $\e_6 > B$, we may restrict the
summation to $\e_6$ in the range $|\e_7| \le \e_6 \le B$.

We will estimate $A$ using 
Lemma~\ref{lem:summation_f_g}. This produces a main term and an error
term, the latter having size 
\[\ll 2^{\omega(\e_1\e_2\e_3)}(\log B) \sup_{t_6}g_0(Y_0,t_6,\e_7/Y_7),\]
where the supremum is over all $t_6 \in \RR$ such that 
$ Y_6t_6\ge |\e_7|$. 
This therefore gives an overall contribution
\begin{equation}
  \label{eq:overall_anna}
  \ll Y_2 2^{\omega(\e_2)+\omega(\e_1\e_2\e_3)}(\log B)
  \sup_{t_6}g_0(Y_0,t_6,\e_7/Y_7),
\end{equation}
to $\Nonea$, since \[\frac{1}{k_{12}\e_5}\sum_{\substack{1 \le \rho \le
    k_{12}\e_5\\\gcd(\rho,k_{12}\e_5)=1}}1=\phi^*(k_{12}\e_5).\] The main term
in our application of Lemma~\ref{lem:summation_f_g} to $A$ is simply
\[\Theta(\ee, k_{12})\frac{Y_6}{k_{12}\e_5}\int_{
  Y_6t_6\ge |\e_7|, t_6>0} g_0(Y_0,t_6,\e_7/Y_7)\dd t_6,\] with
\begin{align*}
  \Theta(\ee, k_{12}) &=
  \frac{\phis(\e_1\e_2\e_3)}{\zeta(2)\phis(\gcd(\e_1\e_2\e_3, k_{12}\e_5))}
  \prod_{p\mid \e_1\e_2\e_3\e_4\e_5}\Big(1-\frac{1}{p^2}\Big)^{-1}\\
  &=\frac{\phis(\e_1\e_2\e_3)}{\zeta(2)\phis(\gcd(\e_2, k_{12}\e_5))}
  \prod_{p\mid \e_1\e_2\e_3\e_4\e_5}\Big(1-\frac{1}{p^2}\Big)^{-1}.
\end{align*}
Here we have used the fact that $\cp{\e_1\e_3}{k_{12}\e_5}$. Note for
future reference that $\Theta(\ee, k_{12}) \ll 1$.
We are now ready to establish the following result.

\begin{lemma}\label{lem:a_e6}
We have
\[\Nonea=\frac{Y_2Y_6}{\e_5}\gonea(Y_0,\e_7/Y_7, \ee;  B)
\thonea(\ee) +O(\Ronea(\ee, \e_7;B))\]
with 
\[\thonea(\ee):=\sum_{\substack{k_{12}\mid \e_2\\\cp{k_{12}}{\e_1\e_3\e_4}}}
\frac{\mu(k_{12})}{k_{12}} \Theta(\ee,k_{12})\phis(\e_2\e_3\e_4\e_5),\]
and \[\sum_{\ee, \e_7} \Ronea(\ee, \e_7;B)\ll B(\log B)^3.\]
\end{lemma}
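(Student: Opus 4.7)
The plan is to carry out the two remaining stages: sum the main term of $A$ given by Lemma~\ref{lem:summation_f_g} over $\rho$ and $k_{12}$ to reproduce the claimed asymptotic, and then bound the resulting error $\Ronea$ and sum it over the remaining variables.

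The main term of $A$ does not depend on $\rho$, so summation over the $\phi(k_{12}\e_5)=k_{12}\e_5\,\phis(k_{12}\e_5)$ admissible residues $\rho$ just multiplies it by $\phi(k_{12}\e_5)$. The factor $k_{12}\e_5$ then cancels the matching denominator, while the substitution $t=Y_6u$ converts $\int g_0(Y_0,t/Y_6,\e_7/Y_7)\,\dd t$ into $Y_6\gonea(Y_0,\e_7/Y_7;\ee;B)$ via \eqref{eq:g1g}. The residual arithmetic factor summed over $k_{12}$ equals $\thonea(\ee)$ once we verify that
\[
\phis(\e_2\e_3\e_4)\cdot\frac{\phis(k_{12}\e_5)}{\phis(\gcd(\e_2,k_{12}\e_5))}=\phis(\e_2\e_3\e_4\e_5)
\]
whenever $k_{12}\mid\e_2$. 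This follows from the multiplicativity identity $\phis(xy)=\phis(x)\phis(y)/\phis(\gcd(x,y))$ on squarefree inputs: both $\phis(k_{12}\e_5)$ and $\phis(\gcd(\e_2,k_{12}\e_5))$ contribute an identical extra factor $\phis(k_{12})/\phis(\gcd(k_{12},\e_5))$ that cancels, leaving $\phis(\e_5)/\phis(\gcd(\e_2,\e_5))$; then $\cp{\e_5}{\e_3\e_4}$ yields $\phis(\e_2\e_3\e_4)\phis(\e_5)/\phis(\gcd(\e_2,\e_5))=\phis(\e_2\e_3\e_4\e_5)$.

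For the error term, one first notes that $R_g(I)=O(1)$ for $g(t)=g_0(Y_0,t/Y_6,\e_7/Y_7)$: the function $g_0(Y_0,\cdot,\e_7/Y_7)$ is the $t_2$-measure of a one-parameter family of plane semi-algebraic sets cut out by the five inequalities $h\le 1$, and so is piecewise algebraic in $t_6$ with $O(1)$ smooth pieces. Lemma~\ref{lem:summation_f_g} then gives the per-residue error $\ll 2^{\omega(\e_1\e_2\e_3)}(\log B)\sup_{t_6}g_0(Y_0,t_6,\e_7/Y_7)$, which by Lemma~\ref{lem:bound_g}(1) is $\ll 2^{\omega(\e_1\e_2\e_3)}(\log B)Y_7^{1/2}/(Y_0|\e_7|^{1/2})$. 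Summing over $\phi(k_{12}\e_5)$ residues and over the $2^{\omega(\e_2)}$ divisors $k_{12}$, and including the outer factor $Y_2/\e_5$ (absorbing the bounded $\phis$-factors), produces
\[
\Ronea(\ee,\e_7;B)\ll\frac{Y_2Y_7^{1/2}}{Y_0|\e_7|^{1/2}}\cdot 8^{\omega(\e_2)}2^{\omega(\e_1\e_3)}(\log B).
\]

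The main obstacle is the final summation of $\Ronea$ over $\e_7\in\ZZnz$ and $\ee\in\ZZp^5$. The range of $\e_7$ is constrained by the height conditions \eqref{eq:H3} and \eqref{eq:H4} together with $\e_6\ge|\e_7|$; summing $|\e_7|^{-1/2}$ over this range and then substituting the explicit forms of $Y_0,Y_2,Y_6,Y_7$ as products of powers of the $\e_i$ and $B$ reduces $\sum_{\e_7}\Ronea$ to a sum against a power of $\e_1\cdots\e_5$ times divisor-like functions. Applying \eqref{eq:phid} to absorb the powers of $\phid$ arising from $8^{\omega(\e_2)}2^{\omega(\e_1\e_3)}$ under the height constraint $\fbase{2}{4}{3}{2}{3}\le B$ then produces the required bound $O(B(\log B)^3)$; the delicate point is to verify that the exponent of the logarithm is exactly $3$, which will follow from careful bookkeeping of the logarithms saved by the $|\e_7|$-integration.
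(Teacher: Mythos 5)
Your argument follows the paper's proof essentially verbatim: the same summation over $\rho$ and $k_{12}$ (with the same $\phis$-identity reducing the arithmetic factor to $\phis(\e_2\e_3\e_4\e_5)$, which you verify correctly), and the same reduction of the error term to $Y_2Y_7^{1/2}Y_0^{-1}|\e_7|^{-1/2}$ times divisor-type weights, with the range of $\e_7$ controlled by combining $\e_6\ge|\e_7|$ with the height condition $\fbase 2 4 3 2 3\e_6\le B$. The deferred bookkeeping does check out exactly as you predict: $Y_2Y_7^{1/2}/Y_0=B^{1/2}/\fbase{1/2}{0}{0}{0}{-1/2}$, the $\e_7$-sum contributes $(B/\fbase{2}{4}{3}{2}{3})^{1/2}$, the resulting sums over $\e_1,\e_2,\e_3$ converge against the divisor weights (exponents $3/2$, $2$, $3/2$), and $\e_4,\e_5$ each contribute one logarithm, which together with the explicit factor $\log B$ gives $B(\log B)^3$.
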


\begin{proof}
It is clear from our calculations above that the main term in our
estimate for $\Nonea$ is equal to 
$Y_2Y_6 \gonea(Y_0,\e_7/Y_7, \ee;  B)\thonea(\ee)/\e_5$, with 
\begin{align*}
\thonea(\ee)&=
\sum_{\substack{k_{12}\mid \e_2\\\cp{k_{12}}{\e_1\e_3\e_4}}}
\frac{\mu(k_{12})\phis(\e_2\e_3\e_4)
\phis(k_{12}\e_5)}{k_{12}\phis(\gcd(\e_2\e_3\e_4,k_{12}\e_5))} 
\Theta(\ee, k_{12})\\
&=  \phis(\e_2\e_3\e_4\e_5) 
\sum_{\substack{k_{12}\mid \e_2\\\cp{k_{12}}{\e_1\e_3\e_4}}}
\frac{\mu(k_{12})}{k_{12}}\Theta(\ee, k_{12}),
\end{align*}
since $k_{12}\e_5$ is coprime to $\e_3\e_4$ and
every divisor of $k_{12}$ divides $\e_2$. This completes the proof of
the main term in the lemma. 

Turning to the overall contribution from the error term $\Ronea(\ee, \e_7;B)$,
which we have already seen has size \eqref{eq:overall_anna}, we conclude from
\eqref{eq:anna-clap} and \eqref{eq:Ta} that \[|\fbase 2 4 3 2 3 \e_7| \leq
B,\] for the $\ee, \e_7$ that we need to sum over.  We therefore deduce from
Lemma~\ref{lem:bound_g}\eqref{it:bound_gzero} that
\begin{align*}
  \sum_{\ee, \e_7} \Ronea(\ee, \e_7;B) &\ll \log B \sum_{\ee, \e_7} Y_2
  4^{\omega(\e_2)}2^{\omega(\e_1)+\omega(\e_3)} \cdot
  \frac{Y_7^{1/2}}{Y_0|\e_7|^{1/2}}\\
  &= \log B\sum_{\ee, \e_7}
  \frac{4^{\omega(\e_2)}2^{\omega(\e_1)+\omega(\e_3)}B^{1/2}}
  {\fbase{1/2}{0}{0}{0}{-1/2}|\e_7|^{1/2}}\\
  &\ll \log B \sum_{\ee} \frac{4^{\omega(\e_2)}2^{\omega(\e_1)+\omega(\e_3)} B
  }{\fbase{3/2}{2}{3/2}{1}{1}}\\
  &\ll B (\log B)^3,
\end{align*}
as required to complete the proof of the lemma.
\end{proof}

Lemma \ref{lem:a_e6} takes care of the summation of the main term in
Lemma \ref{lem:a1a2} over all of the relevant $\e_6$. We proceed to
sum the resulting main term over the $\e_7$.
Thus we let 
\[
\Ntwoa:=\Ntwoa(\ee;B) = \sum_{\substack{\e_7 \in \ZZnz\\\text{\eqref{eq:cpe7} holds}}}
\frac{Y_2Y_6}{\e_5}
\gonea(Y_0,\e_7/Y_7; \ee; B) \thonea(\ee)
\]
We begin with an application of M\"obius inversion to remove the coprimality
condition \eqref{eq:cpe7}. This gives
\begin{align*}
    \Ntwoa &= \frac{Y_2Y_6}{\e_5}\thonea(\ee)
    \sum_{k_7\mid \e_1\e_2\e_3\e_4\e_5}\mu(k_7)\sum_{|\e_7'|\ge 1}
    \gonea(Y_0,k_7\e_7'/Y_7; \ee;  B),
\end{align*}
where we have written $\e_7=k_7\e_7'$. Partial summation now yields 
\begin{align*}
  \Ntwoa =& \frac{Y_2Y_6Y_7}{\e_5}\thonea(\ee)\sum_{k_7\mid
    \e_1\e_2\e_3\e_4\e_5} \frac{\mu(k_7)}{k_7} \int_{|t_7| \geq k_7/Y_7}
  \gonea(Y_0,t_7;\ee;B) \dd t_7 \\&\quad+O\Big(\frac{Y_2Y_6}{\e_5}
  |\thonea(\ee)|\sum_{k_7 \mid \e_1\e_2\e_3\e_4\e_5} |\mu(k_7)| \sup_{|t_7|
    \ge k_7/Y_7} \gonea(Y_0,t_7;\ee;B)\Big).
\end{align*}
The following result constitutes the final outcome of our summation
over $\e_7$.

\begin{lemma}\label{lem:a_e7}
  We have \[\Ntwoa = \frac{Y_2Y_6Y_7}{\e_5} \gtwoa(Y_0,\ee; B)\thtwoa(\ee)
  +O(\Rtwoa(\ee;B))\] with \[\thtwoa(\ee) :=
  \phis(\e_1\e_2\e_3\e_4\e_5)\thonea(\ee),\] and \[\sum_{\ee} \Rtwoa(\ee;B)\ll
  B(\log B)^{5-2/7}.\]
\end{lemma}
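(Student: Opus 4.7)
The plan is to start from the two displayed expressions for $\Ntwoa$ immediately preceding the lemma, and to convert the M\"obius-weighted integral into the clean form $\phis(\e_1\e_2\e_3\e_4\e_5)\gtwoa(Y_0;\ee;B)$ by replacing the $k_7$-dependent integration range with the $k_7$-independent one demanded by \eqref{eq:g2g}. Concretely, I split
\[
\int_{|t_7|\ge k_7/Y_7}\gonea(Y_0,t_7;\ee;B)\dd t_7 \;=\; \int_{|t_7|\ge 1/Y_7}\gonea\dd t_7 \;-\; \int_{1/Y_7\le|t_7|<k_7/Y_7}\gonea\dd t_7.
\]
The first integral equals $\gtwoa(Y_0;\ee;B)$ by \eqref{eq:g2g} and is independent of $k_7$, so the identity $\sum_{d\mid n}\mu(d)/d=\phis(n)$ applied with $n=\e_1\e_2\e_3\e_4\e_5$ produces
\[
\frac{Y_2Y_6Y_7}{\e_5}\thonea(\ee)\cdot\phis(\e_1\e_2\e_3\e_4\e_5)\cdot\gtwoa(Y_0;\ee;B) = \frac{Y_2Y_6Y_7}{\e_5}\thtwoa(\ee)\gtwoa(Y_0;\ee;B),
\]
which is the main term stated in the lemma.

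The second step is to collect the two sources of error into a single $\Rtwoa(\ee;B)$: the partial-summation error displayed above the lemma, of size $\tfrac{Y_2Y_6}{\e_5}|\thonea(\ee)|\sum_{k_7}|\mu(k_7)|\sup_{|t_7|\ge k_7/Y_7}\gonea$, together with the new correction $\tfrac{Y_2Y_6Y_7}{\e_5}|\thonea(\ee)|\sum_{k_7}\tfrac{|\mu(k_7)|}{k_7}\int_{1/Y_7}^{k_7/Y_7}\gonea\dd t_7$. Both are controlled by Lemma~\ref{lem:bound_g}(\ref{it:bound_gonea}): on $[1/Y_7,k_7/Y_7]$ the bound $\gonea\ll Y_0^{-1}|t_7|^{-7/6}$ integrates to $O(Y_0^{-1}Y_7^{1/6})$, while the supremum satisfies $\sup_{|t_7|\ge k_7/Y_7}\gonea\ll\min\{Y_0^{-1}(Y_7/k_7)^{7/6},Y_0^{-8}\}$. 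Combined with the trivial estimate $|\thonea(\ee)|\ll\phid(\e_2\e_3\e_4\e_5)$ (obtained by plugging $\Theta\ll 1$ into its definition), this yields an explicit pointwise bound for $\Rtwoa(\ee;B)$ in terms of $\ee$ and $B$.

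The main obstacle is the concluding summation of $\Rtwoa(\ee;B)$ over $\ee\in\ZZp^5$ subject to \eqref{eq:H0}--\eqref{eq:H4} and \eqref{eq:cpe}: a crude application of the single bound $\gonea\ll Y_0^{-1}|t_7|^{-7/6}$ produces $O(B(\log B)^5)$, which merely matches the main term and is therefore useless. The gain of $2/7$ in the exponent must be extracted by exploiting \emph{both} bounds in Lemma~\ref{lem:bound_g}(\ref{it:bound_gonea}) simultaneously. I would partition the range of $\ee$ according to the comparison between $|t_7|$ and the crossover $Y_0^{6}$ (where the two bounds coincide), using the pointwise bound $Y_0^{-8}$ in the regime where $Y_0$ is bounded away from $0$ and the decay $Y_0^{-1}|t_7|^{-7/6}$ otherwise. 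Interpolating the two bounds (equivalently, using a geometric mean $(Y_0^{-1}|t_7|^{-7/6})^{\theta}(Y_0^{-8})^{1-\theta}$ with an optimal $\theta$) yields a fractional power saving in one of the $\e_i$. The resulting arithmetic sums $\sum_{\ee}\phid(\e_1\e_2\e_3\e_4\e_5)^j/\fbase{k_1}{k_2}{k_3}{k_4}{k_5}$ are then handled by iterated application of \eqref{eq:phid}, with careful bookkeeping of the $Y_i$ in terms of $B$ producing the exponent $5-2/7$.
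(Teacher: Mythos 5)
Your treatment of the main term is correct and is the same as the paper's: you pull the $k_7$-independent quantity $\gtwoa(Y_0;\ee;B)$ out of the M\"obius sum, evaluate $\sum_{k_7\mid\e_1\e_2\e_3\e_4\e_5}\mu(k_7)/k_7=\phis(\e_1\e_2\e_3\e_4\e_5)$ to produce $\thtwoa(\ee)$, and correctly identify the two constituents of $\Rtwoa(\ee;B)$ (the integral over $1/Y_7\le|t_7|<k_7/Y_7$ and the partial-summation supremum). The bounds you quote from Lemma~\ref{lem:bound_g}\eqref{it:bound_gonea} are also the right raw ingredients: per fixed $\ee$ they give $\Rtwoa(\ee;B)\ll\phid(\e_2)\min\bigl\{Y_2Y_6Y_7^{7/6}/(\e_5Y_0),\,Y_2Y_6/(\e_5Y_0^8)\bigr\}$ up to $2^{\omega}$-factors, i.e.\ $\min\bigl\{B^{5/6}/\fbase{1/2}{0}{1/3}{2/3}{1/6},\,B^{2}/\fbase{4}{7}{5}{3}{6}\bigr\}$.

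The gap is in how you propose to sum this over $\ee$. A geometric-mean interpolation $(\cdot)^{\theta}(\cdot)^{1-\theta}$ of those two quantities cannot give any saving: the exponent of $B$ is $2-\tfrac{7}{6}\theta$, which forces $\theta=6/7$, and at $\theta=6/7$ the exponent vector is exactly $\tfrac67(\tfrac12,0,\tfrac13,\tfrac23,\tfrac16)+\tfrac17(4,7,5,3,6)=(1,1,1,1,1)$, so the interpolated bound is precisely $B/\fbase{1}{1}{1}{1}{1}$ — the main-term summand itself. Summing it over $\fbase{2}{4}{3}{2}{3}\le B$ returns $B(\log B)^5$ (and worse once the $2^{\omega}$ and $\phid$ weights are included), so there is no ``fractional power saving in one of the $\e_i$'' to be had this way; nor is the crossover $|t_7|=Y_0^6$ a condition on $\ee$ that can partition the outer sum. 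What the paper actually does is introduce a parameter $\lambda>0$ and split the $\ee$-sum at $\fbase{3}{6}{4}{2}{5}=\lambda B$. When $\fbase{3}{6}{4}{2}{5}<\lambda B$ it uses the $|t_7|^{-7/6}$ bound: the constraint caps $\e_5$, the convergent sum $\sum\phid(\e_5)\e_5^{-1/6}$ converts $B^{5/6}$ into $\lambda^{1/6}B/\fbase{1}{1}{1}{1}{0}$, and only \emph{four} logarithms remain, giving $\ll\lambda^{1/6}B(\log B)^4$. When $\fbase{3}{6}{4}{2}{5}\ge\lambda B$ it uses the $Y_0^{-8}$ bound: the constraint forces $\e_5$ large, $\sum_{\e_5>x}2^{\omega(\e_5)}\e_5^{-6}\ll x^{-5}\log x$ converts $B^2/\fbase{4}{7}{5}{3}{6}$ into $\lambda^{-1}B(\log B)/\fbase{1}{1}{1}{1}{0}$, and the $2^{\omega}$-weights cost four extra logarithms, giving $\ll\lambda^{-1}B(\log B)^9$. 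Balancing $\lambda^{1/6}(\log B)^4=\lambda^{-1}(\log B)^9$ yields $\lambda=(\log B)^{30/7}$ and the bound $B(\log B)^{33/7}=B(\log B)^{5-2/7}$ — this optimisation over the split point, absent from your sketch, is the entire source of the exponent $2/7$.
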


\begin{proof}
The effect of replacing the integral $\int_{|t_7| \geq k_7/Y_7}
\gonea(Y_0,t_7;\ee;B) \dd t_7$ by $\gtwoa(Y_0, \ee; B)$ in our estimate for
$\Ntwoa$, is to create an additional term
\[\frac{Y_2Y_6Y_7}{\e_5} |\thonea(\ee)|
\sum_{k_7\mid \e_1\e_2\e_3\e_4\e_5} \frac{|\mu(k_7)|}{k_7} \int_{1/Y_7 <
  |t_7| < k_7/Y_7} \gonea(Y_0,t_7;\ee;B) \dd t_7,\] that must become part of
$\Rtwoa(\ee;B)$.  Let us think of this as the first term in $\Rtwoa(\ee;B)$.
The second term that appears in $\Rtwoa(\ee;B)$ is the error appearing in
the asymptotic formula for $\Ntwoa$ that directly precedes the statement of
the lemma.

We will need to estimate the overall contribution from both of these
terms separately. It will be convenient to note that 
$\thonea(\ee)=O(\phid(\e_2))$, in the notation of \S~\ref{s:af}.
Let $\lambda>0$ be a parameter to be selected in due course. 
Our argument will depend upon whether or not  $\fbase 3 6 4 2 5 < \lambda B$
in the summation over the $\ee$. Accordingly let $E_1(\lambda)$ denote
the overall contribution from the two errors terms once summed over 
$\ee$ such that 
\begin{equation}
  \label{eq:case1}
  \fbase 3 6 4 2 5 < \lambda B,
\end{equation}
and let $E_2(\lambda)$ denote the remaining contribution from $\ee$
such that
\begin{equation}
  \label{eq:case2}
  \fbase 3 6 4 2 5 \geq  \lambda B.
\end{equation}

Beginning with the estimation of $E_1(\lambda)$, we employ
Lemma~\ref{lem:bound_g}\eqref{it:bound_gonea} to conclude that 
\begin{align*}
\int_{1/Y_7}^{k_7/Y_7} \gonea(Y_0,t_7;\ee;B) \dd t_7 \ll
  \int_{1/Y_7}^{k_7/Y_7} \frac{1}{Y_0|t_7|^{7/6}} \dd t_7 
&\ll
  \frac{Y_7^{1/6}}{Y_0}.
\end{align*}
Once summed over all $\ee$ such that \eqref{eq:case1} holds, we use
\eqref{eq:phid} to estimate the overall contribution from the first term in
$\Rtwoa(\ee;B)$ as
\[
  \begin{split}
    &\ll \sum_{\ee} \sum_{k_7\mid\e_1\e_2\e_3\e_4\e_5} \frac{|\mu(k_7)|}{k_7}
    \frac{\phid(\e_2) Y_2Y_6Y_7^{7/6}}{\e_5Y_0}\\
    &\ll \sum_{\ee} \frac{\phid(\e_1\e_2\e_3\e_4\e_5)\phid(\e_2)
      B^{5/6}}{\fbase{1/2}{0}{1/3}{2/3}{1/6}}\\
    &\ll \sum_{\e_1,\e_2,\e_3,\e_4} \frac{
      \phid(\e_1)\phid(\e_3)\phid(\e_4)\phid(\e_2)^2
      \lambda^{1/6} B}{\fbase 1 1 1 1 0}\\
    &\ll \lambda^{1/6} B (\log B)^4.
  \end{split}
\]
Turning to the overall contribution from the second
term in $\Rtwoa(\ee;B)$, we again deduce from 
Lemma~\ref{lem:bound_g}\eqref{it:bound_gonea} that 
\[
\sup_{|t_7| \ge k_7/Y_7}\gonea(Y_0,t_7;\ee;B) \ll \sup_{|t_7| \ge k_7/Y_7}
  \frac{1}{Y_0|t_7|^{7/6}} \ll \frac{Y_7^{7/6}}{Y_0k_7^{7/6}}.
\] 
Hence, in this case too, we obtain the overall contribution 
  \[
\ll \sum_{\ee} \frac{\phid(\e_2)Y_2Y_6Y_7^{7/6}}{\e_5Y_0}
    \ll \sum_{\e_1, \e_2, \e_3, \e_4}
    \frac{\phid(\e_2)\lambda^{1/6}B}{\fbase{1}{1}{1}{1}{0}}
    \ll \lambda^{1/6}B(\log B)^4.
  \] 
Thus far we have shown that $E_1(\lambda)\ll \lambda^{1/6}B(\log
B)^4$.

It remains to produce a suitable upper bound for $E_2(\lambda)$. 
It will be convenient to record the estimates
\[\sum_{n\leq x} \frac{2^{\omega(n)}\phid(n)}{n} \ll (\log x)^2, 
\quad \sum_{n>x} \frac{h^{\omega(n)}}{n^a} \ll x^{1-a}(\log x)^{h-1}.\]
The second inequality 
is valid for any $h \in \ZZp$ and any $a>1$, and 
follows on combining partial summation with the
bound
\[\sum_{n\leq x} h^{\omega(n)}
\leq \sum_{n\leq x} \sum_{n=d_1\ldots d_h}1 
\ll \sum_{d_1,\ldots,d_{h-1}\leq x} \frac{x}{d_1\ldots d_{h-1}}\ll x
(\log x)^{h-1}.\]
Beginning with the first term in $\Rtwoa(\ee;B)$, 
we deduce from Lemma~\ref{lem:bound_g}\eqref{it:bound_gonea} that 
\[
\int_{1/Y_7}^{k_7/Y_7}
  \gonea(Y_0,t_7;\ee;B) \dd t_7 \ll \int_{1/Y_7}^{k_7/Y_7} \frac{1}{Y_0^8}
  \dd t_7 \ll \frac{k_7}{Y_7Y_0^8}.
\]
Summing over $\ee$ such that \eqref{eq:case2} holds, we therefore
obtain the overall contribution 
  \[
  \begin{split}
&\ll \sum_{\ee} \sum_{k_7\mid\e_1\e_2\e_3\e_4\e_5}
    \frac{|\mu(k_7)|}{k_7}\frac{\phid(\e_2)k_7Y_2Y_6}{\e_5Y_0^8}\\
    &\ll \sum_{\ee}
    \frac{2^{\omega(\e_1\e_2\e_3\e_4\e_5)}\phid(\e_2)B^2}{\fbase 4 7 5 3 6}\\
    &\ll \sum_{\e_1,\dots, \e_4}
    \frac{2^{\omega(\e_1\e_2\e_3\e_4)}\phid(\e_2)B \log B}{\lambda \fbase 1 1
      1 1 0}\\
    &\ll \lambda^{-1} B(\log B)^9,
  \end{split}\]
by \eqref{eq:phid}.
Similarly, for the contribution from the second term in $\Rtwoa(\ee;B)$,
we may use Lemma~\ref{lem:bound_g}\eqref{it:bound_gonea} to deduce
the overall contribution 
  \[
  \begin{split}
\ll \sum_{\ee} \frac{2^{\omega(\e_1\e_2\e_3\e_4\e_5)}\phid(\e_2)Y_2Y_6}{\e_5Y_0^8}
\ll \lambda^{-1} B(\log B)^9.
\end{split}\] Taken together this shows that $ E_2(\lambda)\ll \lambda^{-1}
B(\log B)^9$.  We choose $\lambda = (\log B)^{30/7}$, which therefore gives
the overall contribution \[\sum_{\ee} \Rtwoa(\ee;B)\ll B(\log B)^{33/7},\] as
required.
\end{proof}

\subsection{Estimating $N_{b}(B)$ --- second step}\label{s:b}

We must now return to the main term in Lemma \ref{lem:a1a2}, but this
time reverse the order of summation for $\e_6$ and $\e_7$. 
This will allow us to make use of the inequality \eqref{eq:Tb} in our
treatment of the error terms. 
We begin with the summation over $\e_7$.
For fixed $\ee, \e_6$ satisfying the coprimality conditions
\eqref{eq:cpe6} and  \eqref{eq:cpe}, define $\Noneb:=\Noneb(\ee,
\e_6;B)$ to be the sum of the main term in Lemma
\ref{lem:a1a2} over all $\e_7\in \ZZnz$ such that 
the coprimality condition \eqref{eq:cpe7} holds, and furthermore, 
$|\e_7| > \e_6=\max\{\e_6,1\}$.

Our argument is very similar in spirit to the preceding
section. Removing \eqref{eq:cpe7} with an application of M\"obius
inversion, we find that 
\[
\begin{split}
  \Noneb =
  &\frac{Y_2}{\e_5}\frac{\phis(\e_6)\phis(\e_2\e_3\e_4)}{\phis(\gcd(\e_6,\e_4))}
\sum_{\substack{k_{12}\mid \e_2\\\cp{k_{12}}{\e_1\e_3\e_4}}}
  \frac{\mu(k_{12})}{k_{12}\phis(\gcd(\e_2,k_{12}\e_5))} \\
  &\quad \times\sum_{\substack{1 \le \rho \le
    k_{12}\e_5\\\gcd(\rho,k_{12}\e_5)=1}}
  \sum_{\substack{k_7\mid \e_1\e_2\e_3\e_4\e_5\\\cp{k_7}{k_{12}\e_5}}}\mu(k_7)
  A, 
\end{split}
\] 
where 
\[
A = \sum_{\substack{\e_7' \in
    \ZZnz\\\congr{\rho^2\e_1}{-\e_3\e_6k_7\e_7'}{k_{12}\e_5}\\
k_7|\e_7'| > \e_6}}
g_0(Y_0,\e_6/Y_6,k_7\e_7'/Y_7),
\] 
and we have written $\e_7 = k_7\e_7'$.
Note that we have been able to add the
constraint $\cp{k_7}{k_{12}\e_5}$ in the sum over $k_7$, since  $A=0$ otherwise.

Since $\cp{\e_3\e_6k_7}{k_{12}\e_5}$, it follows from an easy
application of partial summation that  
\[
A = \frac{Y_7}{k_{12}k_7\e_5}
\goneb(Y_0,\e_6/Y_6;\ee;B) + O\Big(\sup_{t_7}g_0(Y_0,\e_6/Y_6,t_7)\Big),
\]
where the supremum is over $t_7\in \RR$ such that $|t_7| >\e_6/Y_7$.
We may now establish the following result.

\begin{lemma}\label{lem:b_e7}
We have
\[
\Noneb =  \frac{Y_2Y_7}{\e_5}\goneb(Y_0, \e_6/Y_6; \ee;B)
  \thoneb(\ee)\frac{\phis(\e_6)}{\phis(\gcd(\e_6,\e_4))}
 +O(\Roneb(\ee, \e_6;B))
\]
with 
\[
\thoneb(\ee) :=
\phis(\e_2\e_3\e_4\e_5)\phis(\e_1\e_2\e_3\e_4)\sum_{\substack{k_{12}\mid
    \e_2\\\cp{k_{12}}{\e_1\e_3\e_4}}}\frac{\mu(k_{12})}
{k_{12}\phis(\gcd(\e_2,k_{12}\e_5))},
\]
and 
\[\sum_{\ee, \e_6} \Roneb(\ee, \e_6;B)\ll B(\log B)^3.\]
\end{lemma}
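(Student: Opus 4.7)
The argument will parallel that of Lemma~\ref{lem:a_e6}, with the roles of $\e_6$ and $\e_7$ interchanged. First I would apply M\"obius inversion to \eqref{eq:cpe7}, writing $\e_7 = k_7\e_7'$ with $k_7\mid \e_1\e_2\e_3\e_4\e_5$ squarefree. As in that parallel argument, the summand vanishes unless $\cp{k_7}{k_{12}\e_5}$: this follows from $\cp{\rho\e_1}{k_{12}\e_5}$, where $\cp{\e_1}{k_{12}\e_5}$ is a consequence of $\cp{k_{12}}{\e_1}$ (from the restriction in $\thzero$) and $\cp{\e_1}{\e_5}$ (from \eqref{eq:cpe}). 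Under this restriction, the congruence \eqref{eq:anna} determines $\e_7'$ in a unique residue class modulo $k_{12}\e_5$, since $\cp{\e_3\e_6 k_7}{k_{12}\e_5}$ by \eqref{eq:cpe6} and \eqref{eq:cpe}.

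For each fixed triple $(\rho,k_7,k_{12})$, the inner sum over $\e_7'$ runs through an arithmetic progression modulo $k_{12}\e_5$ weighted by $g_0(Y_0,\e_6/Y_6,k_7\e_7'/Y_7)$, subject to $|k_7\e_7'|>\e_6$. Plain partial summation (no arithmetic-function weight is present here, so Lemma~\ref{lem:summation_f_g} is not needed), combined with the change of variables $t_7=k_7t_7'/Y_7$, yields the estimate stated just before the lemma, with error $O(\sup_{|t_7|>\e_6/Y_7}g_0(Y_0,\e_6/Y_6,t_7))$ independent of $\rho$. Collecting the main term, the $\rho$-sum contributes $\phi(k_{12}\e_5)=k_{12}\e_5\,\phis(k_{12}\e_5)$, while the restricted $k_7$-sum equals
\[
\sum_{\substack{k_7\mid\e_1\e_2\e_3\e_4\e_5\\ \cp{k_7}{k_{12}\e_5}}} \frac{\mu(k_7)}{k_7} = \frac{\phis(\e_1\e_2\e_3\e_4\e_5)}{\phis(k_{12}\e_5)},
\]
using $k_{12}\e_5\mid\e_1\e_2\e_3\e_4\e_5$. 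The factor $\phis(k_{12}\e_5)$ cancels, and the identity
\[
\phis(\e_2\e_3\e_4)\,\phis(\e_1\e_2\e_3\e_4\e_5) = \phis(\e_2\e_3\e_4\e_5)\,\phis(\e_1\e_2\e_3\e_4),
\]
which is an easy consequence of $\phis(XY)=\phis(X)\phis(Y)/\phis(\gcd(X,Y))$ together with $\cp{\e_1}{\e_3\e_4\e_5}$, puts the main term in the claimed form, the prefactor $\phis(\e_6)/\phis(\gcd(\e_6,\e_4))$ from $\thzero$ passing through unchanged.

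For the error, trivially bounding the M\"obius sums (using $\sum_{k\mid M}|\mu(k)|/k = \phid(M)$) and applying Lemma~\ref{lem:bound_g}\eqref{it:bound_gzero}, which yields $\sup_{|t_7|>\e_6/Y_7}g_0 \ll Y_7^{1/2}/(Y_0\e_6^{1/2})$, gives
\[
\Roneb(\ee,\e_6;B) \ll 2^{\omega(\e_2)}\phid(\e_1\e_2\e_3\e_4\e_5)\,\phis(\e_6)\cdot\frac{Y_2Y_7^{1/2}}{Y_0\,\e_6^{1/2}}.
\]
An explicit computation of the exponents of $B$ and the $\e_i$ shows $Y_2Y_7^{1/2}/Y_0 = B^{1/2}(\e_5/\e_1)^{1/2}$; summing $\phis(\e_6)\e_6^{-1/2}$ over $1\le\e_6\le B/\fbase 2 4 3 2 3$ (the range forced by \eqref{eq:H0}) produces an extra factor $\ll (B/\fbase 2 4 3 2 3)^{1/2}$, and the subsequent sum over $\ee$ converges absolutely in $\e_1,\e_2,\e_3$ and contributes logarithmic factors in $\e_4,\e_5$ via \eqref{eq:phid}, delivering the claimed $B(\log B)^3$ bound. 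The main obstacle is the bookkeeping in the middle step: one must track a product of four $\phis$-factors and check that they collapse cleanly to $\thoneb(\ee)$, which relies critically on the coprimality conditions~\eqref{eq:cpe}.
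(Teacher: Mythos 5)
Your argument is correct and follows the paper's proof essentially verbatim: M\"obius inversion on \eqref{eq:cpe7} with the restriction $\cp{k_7}{k_{12}\e_5}$, partial summation over the progression modulo $k_{12}\e_5$, the same collapse of the $\rho$- and $k_7$-sums into $\phis(\e_1\e_2\e_3\e_4\e_5)$ followed by the $\phis$-identity using \eqref{eq:cpe}, and the same computation $Y_2Y_7^{1/2}/Y_0=B^{1/2}(\e_5/\e_1)^{1/2}$ for the error. One small bookkeeping point: the partial-summation error carries no $1/k_7$ weight, so the $k_7$-sum should be bounded by $2^{\omega(\e_1\e_2\e_3\e_4)}$ rather than $\phid(\e_1\e_2\e_3\e_4\e_5)$ --- this is what the paper does, and it still delivers $B(\log B)^3$ since the $\e_4$- and $\e_5$-sums then contribute $(\log B)^2$ and $\log B$ respectively.
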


\begin{proof}
It is clear that the main term in the lemma is valid with 
\begin{align*}
  \thoneb &=\sum_{\substack{k_{12}\mid \e_2\\\cp{k_{12}}{\e_1\e_3\e_4}}}
  \frac{\mu(k_{12})\phis(\e_2\e_3\e_4)\phis(k_{12}\e_5)}
  {k_{12}\phis(\gcd(\e_2,k_{12}\e_5))}
  \frac{\phis(\e_1\e_2\e_3\e_4)}{\phis(\gcd(\e_1\e_2\e_3\e_4,k_{12}\e_5))}\\
  &=\sum_{\substack{k_{12}\mid \e_2\\\cp{k_{12}}{\e_1\e_3\e_4}}}
  \frac{\mu(k_{12})}{k_{12}} \phis(k_{12}\e_2\e_3\e_4\e_5)
  \frac{\phis(\e_1\e_2\e_3\e_4)}{\phis(\gcd(\e_2,k_{12}\e_5))}\\
  &=\phis(\e_2\e_3\e_4\e_5)\phis(\e_1\e_2\e_3\e_4)\sum_{\substack{k_{12}\mid
      \e_2\\\cp{k_{12}}{\e_1\e_3\e_4}}}\frac{\mu(k_{12})}
  {k_{12}\phis(\gcd(\e_2,k_{12}\e_5))},
  \end{align*}
as claimed. We have used here the fact that $\cp{k_{12}\e_5}{\e_1\e_3\e_4}$.

For the error term, we deduce from \eqref{eq:anna-clap} that $ \fbase 2 4 3 2
3 \e_6 \le B, $ for the $\ee, \e_6$ that we need to sum $\Roneb(\ee, \e_6;B)$
over.  Using Lemma~\ref{lem:bound_g}\eqref{it:bound_gzero} to bound $g_0$, we
easily deduce that
\begin{align*}
  \sum_{\ee, \e_6} \Roneb(\ee, \e_6;B) &\ll \sum_{\ee,\e_6}Y_2
  2^{\omega(\e_2)+\omega(\e_1\e_2\e_3\e_4)}\sup_{|t_7| >
    \e_6/Y_7} g_0(Y_0,\e_6/Y_6,t_7)\\
  &\ll \sum_{\ee,\e_6}
  \frac{2^{\omega(\e_2)+\omega(\e_1\e_2\e_3\e_4)}Y_2Y_7^{1/2}}{Y_0\e_6^{1/2}}\\
  &=\sum_{\ee,\e_6}\frac{2^{\omega(\e_2)+\omega(\e_1\e_2\e_3\e_4)}B^{1/2}}
  {\fbase{1/2}0 0 0{-1/2}\e_6^{1/2}}\\
  &\ll
  \sum_{\ee}\frac{2^{\omega(\e_2)+\omega(\e_1\e_2\e_3\e_4)}B}
  {\fbase{3/2}{2}{3/2}{1}{1}}\\
  &\ll B\sum_{\e_4, \e_5}\frac{2^{\omega(\e_4)}}{\e_4\e_5}.
\end{align*}
But this is $O(B (\log B)^3)$, as required.  This completes the proof of the
lemma.
\end{proof}

We must now sum the main term in Lemma \ref{lem:b_e7} over all of the relevant
$\e_6$, and then over $\e_1,\ldots,\e_5$. In doing so it will be convenient
distinguish between values of $\ee, \e_6$ such that
\begin{equation}\label{eq:cond}
\fbase{2}{4}{3}{2}{3}\leq \frac{B}{(\log B)^A},
\end{equation}
for some $A>0$, and those for which this inequality does not hold. We write
$N_{b_1}(B;A)$ and $N_{b_2}(B;A)$ for the corresponding contributions.
The following result shows that $N_{b_2}(B;A)$
makes a negligible contribution to $N_{U,H}(B)$.

\begin{lemma}\label{lem:b2}
We have $N_{b_2}(B;A) \ll_A B (\log B)^4(\log\log B)$.
\end{lemma}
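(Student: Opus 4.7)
\textit{Plan.} Let $M(\ee, \e_6)$ denote the main term in Lemma~\ref{lem:b_e7}. In the regime defining $N_{b_2}$, the hypothesis $\fbase{2}{4}{3}{2}{3} > B/(\log B)^A$ combined with the height constraint \eqref{eq:H0} forces $1 \le \e_6 \le B/\fbase{2}{4}{3}{2}{3} < (\log B)^A$, so $\e_6$ is confined to a very short range. It is this compression of the $\e_6$ range (together with a corresponding compression of the $\e_1$ range in the final step) that will produce the saving of $\log\log B / \log B$ compared to the main asymptotic.

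First I would bound $M(\ee, \e_6)$ using three ingredients: (i) $|\thoneb(\ee)| \ll \phid(\e_2)$, which follows by the same style of reasoning that gave $\thonea \ll \phid(\e_2)$ in \S\ref{s:a}, namely bounding the inner M\"obius sum by $\phid(\e_2)/\phis(\e_2)$ and absorbing the $\phis(\e_2)$ into the leading $\phis$-factors; (ii) the trivial bound $\phis(\e_6)/\phis(\gcd(\e_6,\e_4)) \le 1$; and (iii) Lemma~\ref{lem:bound_g}\eqref{it:bound_goneb} combined with $Y_6 = 1/Y_0$, which gives $\goneb(Y_0, \e_6/Y_6; \ee; B) \ll 1/(Y_0^{7/4} \e_6^{3/4})$. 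I would then sum over $1 \le \e_6 \le X := B/\fbase{2}{4}{3}{2}{3}$ via $\sum_{n \le X} n^{-3/4} \ll X^{1/4}$. Upon substituting the definitions of $Y_0, Y_2, Y_7$ from \S\ref{s:prelim}, all the powers of $B$ and of each $\e_i$ conspire to cancel, leaving
\[
\sum_{\e_6} M(\ee, \e_6) \ll \frac{B \phid(\e_2)}{\e_1 \e_2 \e_3 \e_4 \e_5}.
\]

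It remains to sum this over $\ee \in \ZZp^5$ in the annulus $B/(\log B)^A < \fbase{2}{4}{3}{2}{3} \le B$. Fixing $\e_2, \e_3, \e_4, \e_5$, the variable $\e_1$ lies in an interval whose endpoints have ratio at most $(\log B)^{A/2}$, so $\sum_{\e_1} 1/\e_1 \ll A \log\log B$; and the residual sum $\sum_{\e_2^4\e_3^3\e_4^2\e_5^3 \le B} \phid(\e_2)/(\e_2\e_3\e_4\e_5) \ll (\log B)^4$ by \eqref{eq:phid} and standard estimates for sums over simplices. Combining all factors yields $N_{b_2}(B;A) \ll_A B (\log B)^4 \log\log B$. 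The principal technical obstacle is the bookkeeping of exponents in the second paragraph: one must verify that $Y_2 Y_7 X^{1/4}/(\e_5 Y_0^{7/4})$ simplifies to exactly $B/(\e_1\e_2\e_3\e_4\e_5)$, and everything else is routine once this cancellation is confirmed.
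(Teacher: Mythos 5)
Your argument is correct and is essentially the paper's own proof: both bound $\thoneb(\ee)$ by $\phid(\e_2)$, apply Lemma~\ref{lem:bound_g}\eqref{it:bound_goneb} to $\goneb$, and exploit the failure of \eqref{eq:cond} in tandem with the height constraint $\fbase{2}{4}{3}{2}{3}\e_6\le B$, arriving at the same intermediate quantity $B^{3/4}\phid(\e_2)/(\fbase{1/2}{0}{1/4}{1/2}{1/4}\e_6^{3/4})$. The only (cosmetic) difference is the order of the final summations -- the paper sums $\e_1$ over its full range and extracts the $\log\log B$ from $\sum_{\e_6<(\log B)^A}\e_6^{-1}$, whereas you sum $\e_6$ over its full range and extract it from the short $\e_1$-interval -- and your exponent bookkeeping ($Y_2Y_7X^{1/4}/(\e_5Y_0^{7/4})=B/\fbase{1}{1}{1}{1}{1}$) checks out.
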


\begin{proof}
Once taken in conjunction with the inequalities for $\ee, \e_6$ in
\eqref{eq:anna-clap}, the failure of \eqref{eq:cond} 
clearly implies that we must sum over
$\ee,\e_6$ for which 
\begin{equation}
  \label{eq:height'}
\eta_1^2\eta_2^4\eta_3^3\eta_4^2\eta_5^3\eta_6
\leq B, \quad \eta_6<(\log B)^A.
\end{equation}
Recalling  the definition of the main term from Lemma \ref{lem:b_e7}, we
see  that 
\begin{align*}
N_{b_2}(B;A)
&\ll \sum_{\substack{\ee, \e_6\\ 
\text{\scriptsize{\eqref{eq:height'} holds}}
}}
\frac{Y_2Y_7}{\e_5}\goneb(Y_0, \e_6/Y_6; \ee;B)
  \thoneb(\ee)\frac{\phis(\e_6)}{\phis(\gcd(\e_6,\e_4))}\\
&\ll \sum_{\substack{\ee, \e_6\\ 
\text{\scriptsize{\eqref{eq:height'} holds}}
}}
\frac{Y_2Y_6^{3/4}Y_7 \phid(\eta_2)}{Y_0 \e_5\e_6^{3/4}},
\end{align*}
using Lemma \ref{lem:bound_g}(\ref{it:bound_goneb}).
In view of the definitions of the $Y_i$ we conclude that
\begin{align*}
N_{b_2}(B;A)
&\ll B^{3/4}\sum_{\substack{\ee, \e_6\\ 
\text{\scriptsize{\eqref{eq:height'} holds}}
}}
\frac{\phid(\eta_2)}{
\fbase{1/2}{0}{1/4}{1/2}{1/4}\e_6^{3/4}}\\
&\ll B \sum_{\substack{\e_2,\ldots, \e_6\\ 
\text{\scriptsize{\eqref{eq:height'} holds}}
}}
\frac{\phid(\eta_2)}{
\fbase{0}{1}{1}{1}{1}\e_6}.
\end{align*}
This last expression is clearly satisfactory for the lemma 
by \eqref{eq:phid} with $j=1$ and the fact that the $\e_6$ summation
is over $\e_6<(\log B)^A$.
\end{proof}

Our focus now shifts to estimating $N_{b_1}(B;A)$, deemed to be the overall
contribution from the main term in Lemma \ref{lem:b_e7} that arises from
$\ee,\e_6$ for which \eqref{eq:cond} holds.  For the moment let
$\Ntwob:=\Ntwob(\ee;B)$ be the quantity obtained by summing the main term in
Lemma \ref{lem:b_e7}'s estimate for $\Noneb(\ee,\e_6;B)$, over all $\e_6\in
\ZZp$ such that \eqref{eq:cpe6} holds.  An application of
Lemma~\ref{lem:summation_f_g} with $\alpha=0$ and $q=1$ therefore reveals that
\begin{align*}
  \Ntwob =& \frac{Y_2Y_7}{\e_5}\thoneb(\ee) \sum_{\substack{\e_6 \ge 1
}} f_{\e_4,\e_1\e_2\e_3\e_5}(\e_6)\goneb(Y_0,\e_6/Y_6;\ee;B)\\
  =&\frac{Y_2Y_6Y_7}{\e_5} \gtwob(Y_0;\ee;B)
  \thoneb(\ee)\frac{\phi^*(\e_1\e_2\e_3\e_5)}{\zeta(2)}
\prod_{p\mid \e_1\e_2\e_3\e_4\e_5} 
\Big(1-\frac{1}{p^2}\Big)^{-1}\\
  &\quad +O\left(\frac{Y_2Y_7}{\e_5}|\thoneb(\ee)|(\log
    B)2^{\omega(\e_1\e_2\e_3\e_5)}\sup_{t_6}\goneb(Y_0,t_6;\ee;B)\right)\\
  &\quad +O\left(\frac{Y_2Y_6Y_7}{\e_5} |\thoneb(\ee)| \int_{
0 < t_6 < 1/Y_6}
\goneb(Y_0,t_6;\ee;B)\dd t_6\right),
\end{align*}
where the supremum is over all 
$t_6 \ge 1/Y_6$.  The following result is now straightforward.

\begin{lemma}\label{lem:b_e6}
  We have 
\[\Ntwob = \frac{Y_2Y_6Y_7}{\e_5}\gtwob(Y_0;
  \ee;B)\thtwob(\ee)+O(\Rtwob(\ee;B))
\]
with 
\[
\thtwob(\ee) :=
\thoneb(\ee)\frac{\phi^*(\e_1\e_2\e_3\e_5)}{\zeta(2)}\prod_{p\mid
  \e_1\e_2\e_3\e_4\e_5}
\Big(1-\frac{1}{p^2}\Big)^{-1},
\]
and 
\[\sum_{\substack{\ee\\\text{\eqref{eq:cond} holds}}} \Rtwob(\ee;B)\ll B(\log
B)^{9-A/4}.\] 
\end{lemma}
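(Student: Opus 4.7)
The plan is to apply Lemma~\ref{lem:summation_f_g} to the $\e_6$-sum in $\Ntwob$ with $\alpha=0$, $q=1$, $a=\e_4$, $b=\e_1\e_2\e_3\e_5$ and test function $g(t)=\goneb(Y_0,t/Y_6;\ee;B)$. The interval $I$ may be taken as $[1,O(B)]$ since the height conditions force $g$ to vanish once $t$ exceeds the scale $Y_6^5=B/\fbase 2 4 3 2 3$, and inspection of \eqref{eq:g1l} shows that $g'$ changes sign only $O(1)$ times on $I$. With $q=1$ the constant $c_0$ from \eqref{eq:c0} specialises precisely to the factor appearing in $\thtwob(\ee)$. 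The main term arises from reverting the change of variables to $t_6=t/Y_6$ and extending the range of integration from $[1/Y_6,\infty)$ down to $[0,\infty)$ to produce $\gtwob(Y_0;\ee;B)$; the tail over $[0,1/Y_6]$ together with the error term supplied by Lemma~\ref{lem:summation_f_g} account for the two $O$-terms already displayed in the text preceding the lemma.

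The pivotal estimate for the error analysis is the sharp bound $|\thoneb(\ee)|\ll 1$. Indeed, \eqref{eq:cpe} gives $\gcd(\e_2,\e_5)=1$, so $\gcd(\e_2,k_{12}\e_5)=k_{12}$ for every $k_{12}\mid\e_2$, and the inner sum in $\thoneb$ becomes a multiplicative function of $\e_2$ taking the value $1-1/(p(1-1/p))=(p-2)/(p-1)\in[0,1]$ at each admissible prime $p\mid\e_2$; combined with $\phis\le 1$ this yields $|\thoneb(\ee)|\le 1$. Applying Lemma~\ref{lem:bound_g}\eqref{it:bound_goneb} gives both $\sup_{t_6\ge 1/Y_6}\goneb(Y_0,t_6;\ee;B)\ll Y_6^{3/4}/Y_0$ and $Y_6\int_0^{1/Y_6}\goneb(Y_0,t_6;\ee;B)\dd t_6\ll Y_6^{3/4}/Y_0$, so both displayed error terms are dominated by
\[
\frac{Y_2Y_6^{3/4}Y_7}{Y_0\e_5}\cdot 2^{\omega(\e_1\e_2\e_3\e_5)}\log B.
\]
Using $Y_6=Y_0^{-1}$ together with the explicit formulas for $Y_2,Y_7$, a direct computation reduces the geometric factor to $B^{3/4}\e_1^{-1/2}\e_3^{-1/4}\e_4^{-1/2}\e_5^{-1/4}$.

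The final step is to exploit \eqref{eq:cond}. Writing $B^{3/4}=B\cdot B^{-1/4}$ and substituting $B^{-1/4}\le(\log B)^{-A/4}\fbase{-1/2}{-1}{-3/4}{-1/2}{-3/4}$ gives
\[
\frac{Y_2Y_6^{3/4}Y_7}{Y_0\e_5}\le\frac{B(\log B)^{-A/4}}{\e_1\e_2\e_3\e_4\e_5}.
\]
The resulting five-fold sum of $2^{\omega(\e_1\e_2\e_3\e_5)}/(\e_1\e_2\e_3\e_4\e_5)$ factorises, and $\sum_{n\le N}2^{\omega(n)}/n\ll(\log N)^{2}$ applied to $\e_1,\e_3,\e_5$ together with $\sum_{n\le N}1/n\ll\log N$ applied to $\e_2,\e_4$ contributes a total factor of $(\log B)^{8}$; multiplying by the extra $\log B$ from Lemma~\ref{lem:summation_f_g} yields the claimed bound $B(\log B)^{9-A/4}$. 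The main obstacle I anticipate is isolating the cancellation $|\thoneb(\ee)|\ll 1$: a naive estimate $|\thoneb|\ll\phid(\e_2)^{O(1)}$ would inflate the final exponent of $\log B$ past the allowed budget, so recognising the vanishing at $p=2$ (and smallness at odd primes) in the alternating sum over $k_{12}\mid\e_2$ is the one genuinely non-routine ingredient.
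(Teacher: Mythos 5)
Your overall strategy is the same as the paper's: apply Lemma~\ref{lem:summation_f_g} with $\alpha=0$, $q=1$, $a=\e_4$, $b=\e_1\e_2\e_3\e_5$, identify $c_0$ with the factor $\phis(\e_1\e_2\e_3\e_5)\zeta(2)^{-1}\prod_{p\mid\e_1\e_2\e_3\e_4\e_5}(1-p^{-2})^{-1}$, extend the integral down to $t_6=0$ to produce $\gtwob$, and control both error terms via Lemma~\ref{lem:bound_g}\eqref{it:bound_goneb} and the inequality \eqref{eq:cond}. The main-term derivation and the conversion $B^{3/4}\le B(\log B)^{-A/4}\fbase{-1/2}{-1}{-3/4}{-1/2}{-3/4}\cdot\fbase{1/2}{0}{1/4}{1/2}{1/4}^{\,-1}\cdot\fbase{1/2}{0}{1/4}{1/2}{1/4}$ are exactly right, and the latter is indeed the step that makes the lemma work.

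There is, however, a concrete error in what you call the pivotal estimate. You assert that \eqref{eq:cpe} gives $\gcd(\e_2,\e_5)=1$, and hence $\gcd(\e_2,k_{12}\e_5)=k_{12}$. This is false: the conditions \eqref{eq:cpe} are $\cp{\e_1}{\e_3\e_4\e_5}$, $\cp{\e_2}{\e_4}$ and $\cp{\e_5}{\e_3\e_4}$, and no coprimality between $\e_2$ and $\e_5$ is imposed (consistently with the edge joining $E_5$ and $E_2$ in Figure~\ref{fig:dynkin}). So your evaluation of the $k_{12}$-sum as $\prod_p(p-2)/(p-1)$ is not justified as written. The bound $|\thoneb(\ee)|\le 1$ does survive a more careful factorisation (one must separate the primes $p\mid\gcd(\e_2,\e_5)$, whose contribution $\phis(\gcd(\e_2,\e_5))^{-1}$ is absorbed by $\phis(\e_2\e_3\e_4\e_5)$), but — and this is the second point — the sharpness is not needed at all. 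The trivial bounds $\phis(\gcd(\e_2,k_{12}\e_5))^{-1}\le\phis(\e_2)^{-1}\ll\phid(\e_2)$ and $\sum_{k_{12}\mid\e_2}|\mu(k_{12})|/k_{12}=\phid(\e_2)$ give $\thoneb(\ee)\ll\phid(\e_2)$, and since $\sum_{n\le x}2^{\omega(n)}\phid(n)/n\ll(\log x)^2$ (the same order as without the $\phid$ factor), this costs no additional power of $\log B$; in any case $A$ is a free parameter, eventually taken to be $36$, so a few stray logarithms would be harmless. Your diagnosis that the alternating cancellation over $k_{12}$ is the one non-routine ingredient is therefore a misreading of where the difficulty lies; the real point is \eqref{eq:cond}. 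Finally, your log-count drops the factor $2^{\omega(\e_2)}$ from the $\e_2$-sum (it is present in $2^{\omega(\e_1\e_2\e_3\e_5)}$), so that sum contributes $(\log B)^2$ rather than $\log B$; this is immaterial for the theorem but worth fixing for consistency.
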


\begin{proof}
The value of $\thtwob(\ee)$ in the main term for $\Ntwob$
is a direct consequence of our manipulations above. In considering the
overall contribution from the error term it will be convenient to note that
$\thtwob(\ee)\ll \thoneb(\ee)\ll \phid(\e_2)$.

Once again the error $\Rtwob(\ee;B)$ is comprised of two basic terms, the
first one involving a supremum of $\goneb$ over $t_6$ in an appropriate range,
and the second involving an integration of $\goneb$.  We begin with dealing
with the first term. It is here that we will make critical use of the
inequality \eqref{eq:cond}, that underpins our definition of $N_{b_1}(B;A)$.
The first term in $\Rtwob(\ee;B)$ clearly makes an overall contribution of
\[\ll\sum_{\ee}\frac{2^{\omega(\e_1\e_2\e_3\e_5)}\phid(\e_2)Y_2Y_7 \log
  B}{\e_5}\sup_{t_6 \ge 1/Y_6}\goneb(Y_0,t_6,\ee;B),\]
where the summation is restricted to $\ee$ for which \eqref{eq:cond} holds.
Using Lemma~\ref{lem:bound_g}\eqref{it:bound_goneb} to estimate $\goneb$, we
may bound this as
  \begin{align*}
&\ll \sum_{\ee}
    \frac{2^{\omega(\e_1\e_2\e_3\e_5)}\phid(\e_2)Y_2Y_7Y_6^{3/4}\log B}{\e_5Y_0}\\
    &= \sum_{\ee} \frac{2^{\omega(\e_1\e_2\e_3\e_5)}\phid(\e_2)B^{3/4} \log
      B}{\fbase{1/2}{0}{1/4}{1/2}{1/4}}\\
    &\ll (\log B)^{1-A/4}\sum_{\e_1, \e_2, \e_3, \e_5}
    \frac{2^{\omega(\e_1\e_2\e_3\e_5)}\phid(\e_2)B}{\fbase 1 1 1 0 1 }\\
    &\ll B(\log B)^{9-A/4}.
  \end{align*}
Turning to the contribution from the second term in 
$\Rtwob(\ee;B)$,  we employ Lemma~\ref{lem:bound_g}\eqref{it:bound_goneb}
 and \eqref{eq:cond} to derive the overall contribution 
 \[
  \begin{split}
\ll
    \sum_{\ee} \frac{\phid(\e_2)Y_2Y_6Y_7}{\e_5}
    \int_{0}^{1/Y_6}\frac{1}{Y_0t_6^{3/4}} \dd t_6
    &\ll\sum_{\ee} \frac{\phid(\e_2)Y_2Y_6^{3/4}Y_7}{\e_5Y_0}\\
    &= \sum_{\ee} \frac{\phid(\e_2)B^{3/4}}{\fbase{1/2}{0}{1/4}{1/2}{1/4}}\\
    &\ll (\log B)^{-A/4}\sum_{\e_1,\e_2,\e_3,\e_5} \frac{\phid(\e_2)B}{\fbase
      1 1 1 0 1}\\
    &\ll B(\log B)^{4-A/4}.
  \end{split}
  \]
Together these two upper bounds complete the proof of the lemma. 
\end{proof}

\subsection{The final step}\label{s:final}

Let us take a moment to compile our work so far. We saw at the start of
\S~\ref{s:prelim} that \[N_{U,H}(B)= N_a(B)+ N_b(B).\] It will be convenient
to set $B_0=B/(\log B)^{36}$ in what follows.

The union of Lemmas \ref{lem:a1a2}, \ref{lem:a_e6} and \ref{lem:a_e7}
shows that 
\[N_a(B)=\sum_{\ee\in\EE(B)}
\frac{Y_2Y_6Y_7}{\e_5} \thtwoa(\ee) \gtwoa(Y_0,
  \ee;  B)
+O\big(B(\log B)^{5-2/7}\big),\]
where $\thtwoa(\ee)$ is as in the statement of Lemma \ref{lem:a_e7},
and 
\[\EE(B):=\big\{\ee\in \ZZp^5: \text{\eqref{eq:cpe} holds and $\fbase 2 4 3 2
  3 \leq B$}\big\}.\]
Similarly, we can combine Lemmas \ref{lem:a1a2}, \ref{lem:b_e7}, \ref{lem:b2}
and \ref{lem:b_e6}, taking $A=36$ in the latter two results, to deduce that
\[N_b(B)= \sum_{\substack{\ee\in\EE(B)\\\fbase 2 4 3 2 3 \le B_0}}
\frac{Y_2Y_6Y_7}{\e_5}\thtwob(\ee)\gtwob(Y_0; \ee;B) +O\big(B(\log
B)^4(\log\log B)\big),\]
where $\thtwob(\ee)$ is as in the statement of Lemma \ref{lem:b_e6}.

We would now like to remove the constraint that $\fbase 2 4 3 2 3 \le B_0$ in
our estimate for $N_{b}(B)$.  In view of the fact that $\thtwob(\ee)\ll
\phid(\e_2)$, it easily follows from \eqref{eq:g2l} and
Lemma~\ref{lem:bound_g}\eqref{it:bound_goneb} that
\begin{align*}
\sum_{\substack{\ee\in\EE(B)\\B_0<\fbase 2 4 3 2 3 \le B}}
\hspace{-0.2cm}
\frac{Y_2Y_6Y_7}{\e_5}\thtwob(\ee)\gtwob(Y_0;
  \ee;B)
&\ll 
\sum_{\ee}
\frac{Y_2Y_6Y_7\phid(\e_2)}{\e_5}
\int_{0}^{1/Y_0^4}
\frac{\dd t_6}{Y_0t_6^{3/4}} \\
&\ll 
\sum_{\ee}
\frac{Y_2Y_6^3Y_7\phid(\e_2)}{\e_5}\\
&=
\sum_{\ee}
\frac{B\phid(\e_2)}{\fbase 1 1 1 1 1}\\
&\ll B(\log B)^4(\log\log B).
\end{align*}
In deducing the first bound we have used the fact that $\goneb(t_0,t_6;\ee;
B)=0$
unless $0<t_6\leq 1/t_0^4$, which follows from the definition of
\eqref{eq:h}. Thus we may replace the above formula for $N_{b_1}(B;36)$ by 
\[\sum_{\ee\in\EE(B)}
\frac{Y_2Y_6Y_7}{\e_5}\thtwob(\ee)\gtwob(Y_0;
  \ee;B)
+O\big(B(\log B)^4(\log\log B)\big).\]

For given $\ee\in \EE(B)$, define
\begin{align*}
\thtwo(\ee):=&
\frac{\phis(\e_1\e_2\e_3)\phis(\e_1\e_2\e_3\e_4\e_5)\phis(\e_2\e_3\e_4\e_5)}
{\zeta(2) \prod_{p\mid \e_1\e_2\e_3\e_4\e_5}(1-1/p^2)}\\
&\quad \times
\sum_{\substack{k_{12}\mid\e_2\\\cp{k_{12}}{\e_1\e_3\e_4}}}
\frac{\mu(k_{12})}{k_{12}\phis(\gcd(\e_2,k_{12}\e_5))}.
\end{align*}
It is easily seen that $\thtwo(\ee)=\thtwoa(\ee)$, in the notation
of Lemmas \ref{lem:a_e6} and \ref{lem:a_e7}. Furthermore, on noting that 
  \[\phis(\e_1\e_2\e_3)\phis(\e_1\e_2\e_3\e_4\e_5) =
  \phis(\e_1\e_2\e_3\e_4)\phis(\e_1\e_2\e_3\e_5),
\] 
since $\cp{\e_4}{\e_5}$, we see that $\thtwo(\ee)=\thtwob(\ee)$ also. 
Thus we may draw together our argument so far to conclude that 
\[N_{U,H}(B)= 
\sum_{\ee\in\EE(B)}
\frac{Y_2Y_6Y_7}{\e_5}\thtwo(\ee)\gtwo(Y_0;\ee;B)
+O\big(B(\log B)^{5-2/7}\big),\]
where $\gtwo(t_0;\ee;B)$ is given by \eqref{eq:anna-fly}.
It turns out that there is a negligible contribution to 
$N_{U,H}(B)$ from summing 
$Y_2Y_6Y_7\thtwo(\ee)\gtwo(t_0;\ee;B)/\e_5$ over small values of 
$\ee\in\EE(B)$. The $\ee$ that give the dominant contribution belong
to the set 
\[\EE^*(B):=\big\{\ee\in \ZZp^5: \text{\eqref{eq:cpe} holds, $\fbase 2 4 3 2
  3 \leq B$ and $\fbase 3 6 4 2 5 >B$}\big\}.\]
We also wish to remove the dependence on $\ee$ and $B$ from the
real-valued function $\gtwo(Y_0;\ee;B)$. All of this will be achieved
in the following result.

\begin{lemma}\label{lem:anna-swim}
We have 
\begin{align*}
N_{U,H}(B)= \omega_\infty B
\sum_{\ee\in\EE^*(B)}
\frac{\thtwo(\ee)}{\fbase 1 1 1 1 1}
+O\big(B(\log B)^{5-2/7}\big),
\end{align*}
where $\omega_\infty$ is given by \eqref{eq:inf}.
\end{lemma}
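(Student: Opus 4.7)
The plan is to recognise that the right-hand side of the lemma is a disguised form of a $\gtwo \mapsto G_2$ replacement in the current expression for $N_{U,H}(B)$, and then to control the resulting error in two ranges. Direct computation of fifth powers from the definitions of $Y_0,Y_2,Y_6,Y_7$ yields the two algebraic identities
\[
\frac{Y_2 Y_6 Y_7}{\e_5 Y_0^2} = \frac{B}{\fbase 1 1 1 1 1}, \qquad Y_7 Y_0^6 = \frac{\fbase 3 6 4 2 5}{B}.
\]
Combined with Lemma~\ref{lem:omega_infty}, the first identity shows that the target main term is exactly $\sum_{\ee \in \EE^*(B)}(Y_2Y_6Y_7/\e_5)\thtwo(\ee)G_2(Y_0)$. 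So it suffices to bound by $O(B(\log B)^{5-2/7})$ the two sums
\[
\Delta_1 := \sum_{\ee \in \EE(B) \setminus \EE^*(B)} \frac{Y_2Y_6Y_7}{\e_5}\thtwo(\ee)\gtwo(Y_0;\ee;B), \quad \Delta_2 := \sum_{\ee \in \EE^*(B)} \frac{Y_2Y_6Y_7}{\e_5}\thtwo(\ee)(\gtwo - G_2),
\]
using throughout the bound $|\thtwo(\ee)| \ll \phid(\e_2)$ already noted in Lemmas~\ref{lem:a_e6} and~\ref{lem:b_e7}.

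To bound $\Delta_2$, the second identity shows that $\ee\in \EE^*(B)$ is equivalent to $Y_7 Y_0^6 > 1$, i.e.\ $1/Y_7 < Y_0^6$. The quantity $G_2(Y_0) - \gtwo(Y_0;\ee;B)$ is the volume of $\{h \le 1,\ |t_7| \le 1/Y_7,\ t_6>0\}$, which by Lemma~\ref{lem:bound_g}(\ref{it:bound_gonea}) is at most $\int_{|t_7| \le 1/Y_7}\min\{1/(Y_0|t_7|^{7/6}),\ 1/Y_0^8\}\,\dd t_7$. The range forces $|t_7| \le Y_0^6$, which is exactly where the minimum equals $1/Y_0^8$, so $G_2 - \gtwo \ll 1/(Y_7 Y_0^8)$. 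Applying the two identities collapses $(Y_2Y_6Y_7/\e_5)(G_2-\gtwo)$ to $B^2/\fbase 4 7 5 3 6$. Fixing $\e_1,\ldots,\e_4$ and using the tail bound $\sum_{n>(B/X)^{1/5}} n^{-6} \ll X/B$ (where $X = \fbase 3 6 4 2 0$), the excess in exponents cancels precisely via $X/\fbase 4 7 5 3 0 = 1/(\e_1\e_2\e_3\e_4)$. Hence $|\Delta_2| \ll B\sum_{\fbase 2 4 3 2 0 \le B}\phid(\e_2)/(\e_1\e_2\e_3\e_4)$, which is $O(B(\log B)^4)$ by \eqref{eq:phid}.

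For $\Delta_1$, the range $\fbase 3 6 4 2 5 \le B$ forces $Y_7 Y_0^6 \le 1$, so $|t_7| > 1/Y_7 \ge Y_0^6$ throughout the integration defining $\gtwo$; in this regime the minimum in Lemma~\ref{lem:bound_g}(\ref{it:bound_gonea}) is realised by $1/(Y_0|t_7|^{7/6})$, giving $\gtwo \le \int_{|t_7|>1/Y_7}\dd t_7/(Y_0|t_7|^{7/6}) \ll Y_7^{1/6}/Y_0$. The identities then yield $(Y_2Y_6Y_7/\e_5)\gtwo \ll B(\fbase 3 6 4 2 5/B)^{1/6}/\fbase 1 1 1 1 1$. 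Summing $\e_5 \le (B/X)^{1/5}$ via $\sum_{n\le N}n^{-1/6} \ll N^{5/6}$ produces a factor $(B/X)^{1/6}$; once again the exponents collapse and one finds $|\Delta_1| \ll B\sum_{\fbase 2 4 3 2 0 \le B}\phid(\e_2)/(\e_1\e_2\e_3\e_4) \ll B(\log B)^4$.

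Combining gives a replacement error of $O(B(\log B)^4)$, which is easily absorbed into $O(B(\log B)^{5-2/7})$. The delicate step is the arithmetic cancellation after summing over $\e_5$: in both $\Delta_1$ and $\Delta_2$ the exponents conspire precisely to produce $1/(\e_1\e_2\e_3\e_4)$, reflecting the fact that the two cases meet exactly at the threshold $Y_7 Y_0^6 = 1$, which is the defining boundary of $\EE^*(B)$.
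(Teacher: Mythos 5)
Your proposal is correct and follows essentially the same route as the paper: the same splitting into the contribution from $\ee\in\EE(B)\setminus\EE^*(B)$ (the paper's $M_1(B)$) and the completion $G_2-\gtwo$ over $\EE^*(B)$ (the paper's $M_2(B)$), the same identities $Y_2Y_6Y_7/(\e_5Y_0^2)=B/\fbase 1 1 1 1 1$ and $Y_7Y_0^6=\fbase 3 6 4 2 5/B$ together with Lemma~\ref{lem:omega_infty}, and the same appeals to the two bounds in Lemma~\ref{lem:bound_g}\eqref{it:bound_gonea} followed by summation over $\e_5$ in the appropriate range. The only cosmetic difference is that you justify which branch of the minimum is active at the threshold $Y_7Y_0^6=1$, which is true but not needed since each branch is an upper bound unconditionally.
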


\begin{proof}
We begin by showing that 
\[M_1(B):=\sum_{\substack{\ee\in\EE(B)\\\fbase 3 6 4 2 5 \le B}}
\frac{Y_2Y_6Y_7}{\e_5}\thtwo(\ee)\gtwo(Y_0;\ee;B)
\ll B(\log B)^{4}.\]
Now it follows from \eqref{eq:g0} and \eqref{eq:anna-fly} that
\begin{align*}
\gtwo(t_0;\ee;B)
&=
\int_{h(t_0,t_2,t_6,t_7) \le 1, |Y_7t_7|>1, t_6>0} \dd t_2 \dd t_6 \dd t_7\\
&\leq 
\int_{|t_7|>1/Y_7}
\int_0^\infty g_0(t_0,t_6,t_7) \dd t_6\dd t_7.
\end{align*}
Hence Lemma~\ref{lem:bound_g}\eqref{it:bound_gonea} yields
\[\gtwo(Y_0;\ee;B) \ll \int_{|t_7| > 1/Y_7} \frac{1}{Y_0|t_7|^{7/6}} \dd t_7 \ll
\frac{Y_7^{1/6}}{Y_0}.\] 
Applying this we deduce that
\[
  \begin{split}
M_1(B) \ll \sum_{\fbase 3 6 4 2 5 \le B}
    \frac{\phid(\e_2)Y_2Y_6Y_7^{7/6}}{\e_5Y_0}
    &\ll \sum_{\fbase 3 6 4 2 5 \le B} \frac{\phid(\e_2)B^{5/6}}
    {\fbase{1/2}{0}{1/3}{2/3}{1/6}}\\
    &\ll \sum_{\e_1,\e_2,\e_3,\e_4}\frac{\phid(\e_2)B}{\fbase 1 1 1 1 0}\\
    &\ll B(\log B)^4,
  \end{split}\]
by \eqref{eq:phid}, which therefore shows that 
\begin{align*}
N_{U,H}(B)= 
\sum_{\ee\in\EE^*(B)}
\frac{Y_2Y_6Y_7}{\e_5}\thtwo(\ee)\gtwo(Y_0;\ee;B)
+O\big(B(\log B)^{5-2/7}\big).
\end{align*}
It remains to deal with the real-valued function $\gtwo(Y_0;\ee,B)$.

We will show that 
\[M_2(B):=\sum_{\ee\in\EE^*(B)}
\frac{Y_2Y_6Y_7}{\e_5}\thtwo(\ee)\int_{\substack{h(Y_0,t_2,t_6,t_7) \le 1\\
    |Y_7t_7|\leq 1, t_6>0}} \dd t_2 \dd t_6 \dd t_7 \ll B(\log B)^{4}.\] Once
achieved, this will suffice to complete the proof of the lemma, since an
application of Lemma \ref{lem:omega_infty} reveals that
\[\int_{h(Y_0,t_2,t_6,t_7) \le 1, t_6>0} \dd t_2 \dd t_6 \dd t_7=
\frac{\omega_\infty}{Y_0^2},\] and clearly
\[\frac{Y_2Y_6Y_7}{Y_0^2\e_5}=\frac{B}{\fbase 1 1 1 1 1}.\]
To establish the bound for $M_2(B)$ we appeal to
Lemma~\ref{lem:bound_g}\eqref{it:bound_gonea}, which in a similar manner to
our treatment of $M_1(B)$, implies that
\begin{align*}
  M_2(B) \ll \sum_{\ee\in\EE^*(B)}
  \frac{Y_2Y_6Y_7\phid(\e_2)}{\e_5}\int_{|t_7|\leq 1/Y_7} \frac{\dd
    t_7}{Y_0^8} &\ll \sum_{\ee\in\EE^*(B)}
  \frac{Y_2Y_6\phid(\e_2)}{Y_0^8 \e_5}\\
  & = \sum_{\ee\in\EE^*(B)}
  \frac{\phid(\e_2)B^2}{\fbase 4 7 5 3 6}\\
  & \ll \sum_{\e_1,\e_2,\e_3,\e_4}
  \frac{\phid(\e_2)B}{\fbase 1 1 1 1 0}\\
  & \ll B(\log B)^4.
\end{align*} 
This completes the proof of the lemma.
\end{proof}

Let us redefine the function $\thtwo(\ee)$ so that it is equal to zero
if $\ee$ fails to satisfy the coprimality relations in
\eqref{eq:cpe}. For $\kk = (k_1,\dots,k_5) \in \ZZp^5$, let
\[
\Delta_\kk(n):=\sum_{\substack{\ee\in \ZZp^5\\
    \fbase{k_1}{k_2}{k_3}{k_4}{k_5} = n}} 
\frac{\thtwo(\ee)}{\fbase 1 1 1 1 1}.
\] 
Then Lemma \ref{lem:anna-swim} implies that 
\begin{equation}\label{eq:anna-walk}
N_{U,H}(B)= \omega_\infty B
\sum_{n \le B}
\big(\Delta_{(2,4,3,2,3)}(n)-\Delta_{(3,6,4,2,5)}(n)\big)
+O\big(B(\log B)^{5-2/7}\big).
\end{equation}
We will want to establish an asymptotic formula for 
\[
M_\kk(t) := \sum_{n \le t} \Delta_\kk(n),
\] 
as $t\rightarrow \infty$. We shall do so by studying the corresponding 
Dirichlet series 
\[F_\kk(s) = \sum_{n=1}^\infty \frac{\Delta_\kk(n)}{n^{s}} =
\sum_{\ee\in \ZZp^5} \frac{\thtwo(\ee)}
{\fbase{k_1s+1}{k_2s+1}{k_3s+1}{k_4s+1}{k_5s+1}},
\] 
which is absolutely convergent for $\RE(s)>0$.

By multiplicativity we clearly have an Euler product $F_\kk(s)= \prod_p
F_{\kk,p}(s),$ and a cumbersome computation reveals that the local factors
$F_{\kk,p}(s)$ are equal to
\[
\begin{split}
  & (1-1/p)\cdot \left((1+1/p) +\frac{1-1/p}{p^{k_1s+1}-1} \right.
  \\&\left.+\frac{1-1/p}{p^{k_2s+1}-1}\left((1-2/p)+\frac{1-1/p}
      {p^{k_1s+1}-1}+\frac{1-1/p}{p^{k_3s+1}-1}
      +\frac{1-1/p}{p^{k_5s+1}-1}\right)\right.\\
  &\left.+\frac{(1-1/p)^2}{p^{k_3s+1}-1}\left(1+\frac{1}{p^{k_4s+1}-1}\right)+
    \frac{1-1/p}{p^{k_4s+1}-1}+\frac{1-1/p}{p^{k_5s+1}-1}\right).
\end{split}
\]
Let $\ep>0$ and assume that 
$\kk \in \{(2,4,3,2,3),(3,6,4,2,5)\}$. 
Then it follows that for all $s \in \CC$ belonging to the half-plane
$\RE(s) \ge -1/12+\ep$, we have 
\[
F_{\kk,p}(s)\prod_{j=1}^5\Big(1-\frac{1}{p^{k_js+1}}\Big) = 1+O_\ep(p^{-1-\ep}).
\] 
Thus, on defining 
\[
E_\kk(s) := \prod_{j=1}^5\zeta(k_js+1), \quad G_\kk(s) :=
\frac{F_\kk(s)}{E_\kk(s)},
\] 
we may conclude that $F_\kk(s)$ has a meromorphic 
continuation to  the half-plane $\RE(s) \ge -1/12+\ep$, 
with a pole of order $5$ at $s=0$. 
It will be useful to note that
\begin{equation}
  \label{eq:anna-run}
G_\kk(0)=\prod_p \left(1-\frac 1 p\right)^6 \left(1+\frac 6 p +\frac
  1{p^2}\right).
\end{equation}
To estimate $M_\kk(t)$ we now have everything in place to apply the
following standard Tauberian theorem,
which is recorded in work of Chambert-Loir and Tschinkel
\cite[Appendice~A]{perron}.

\begin{lemma}\label{lem:tauber}
Let $\{c_n\}_{n\in \ZZp}$ be a sequence of positive real
numbers, and let 
$f(s) =\sum_{n=1}^\infty c_n n^{-s}$. 
Assume that:
\begin{enumerate}
\item 
the series defining f(s) converges for $\RE(s) > 0$;
\item 
it admits a meromorphic continuation to $\RE(s) > -\delta$ for some
$\delta>0$, with a unique pole at $s = 0$ of order $b\in\ZZp$;
\item
there exists $\kappa> 0$ such that
\[\Big| \frac{f(s)s^b}{(s+2\delta)^b}\Big|\ll (1 + \Im m(s))^\kappa,\]
for $\RE(s) > -\delta$.
\end{enumerate}
Then there exists a monic polynomial $P$ of degree $b$, and a constant
$\delta'> 0$ such that
\[\sum_{n\leq t}c_n = \frac{\Theta}{b!} P(\log t) + O(t^{-\delta'}),\]
as $t\rightarrow \infty$, where $\Theta=\lim_{s\rightarrow 0} s^bf(s)$.
\end{lemma}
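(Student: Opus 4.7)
The plan is the standard Perron-formula-plus-contour-shift argument, which is the canonical way to extract a power-saving Tauberian theorem of this kind.

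First, I would apply the truncated Perron formula to write
\[
\sum_{n\leq t} c_n = \frac{1}{2\pi i} \int_{c-iT}^{c+iT} f(s) \frac{t^s}{s}\, ds + E(t,T),
\]
for a fixed small $c>0$ and a parameter $T \geq 1$ to be chosen later. Since the $c_n$ are positive and the series defining $f$ converges for $\Re(s)>0$, a partial-summation comparison gives $\sum_{n\leq N} c_n \ll_\varepsilon N^\varepsilon$ for every $\varepsilon>0$, which is what one needs to bound the Perron truncation error $E(t,T)$ by something of the shape $t^{c+\varepsilon}/T$ plus contributions from $n$ near $t$.

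Next, I would shift the contour from $\Re(s)=c$ leftward to $\Re(s)=-\sigma$ for some fixed $0<\sigma<\delta$. Hypothesis (2) permits this, and the only pole crossed is the one at $s=0$. Using the Laurent expansion $f(s)=\sum_{k\geq -b} a_k s^k$ (with $a_{-b}=\Theta$) together with $t^s=\sum_{j\geq 0} (\log t)^j s^j/j!$, a direct computation gives
\[
\mathrm{Res}_{s=0}\,\frac{f(s) t^s}{s} = \sum_{k=0}^{b} \frac{a_{-b+k}}{(b-k)!} (\log t)^{b-k} = \frac{\Theta}{b!}\, P(\log t),
\]
where $P$ is a monic polynomial of degree $b$ whose lower coefficients are determined by the Laurent tail of $f$.

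On the shifted vertical line $\Re(s)=-\sigma$, hypothesis (3) supplies $|f(s)| \ll (1+|\Im(s)|)^{\kappa}$, whence the vertical integral contributes at most $\ll t^{-\sigma} T^\kappa$, and the two horizontal connectors at $\Im(s)=\pm T$ admit a similar bound. Choosing $T$ as a small positive power of $t$, namely $T=t^{(c+\sigma)/(\kappa+1)}$, balances the Perron truncation error against $t^{-\sigma}T^{\kappa}$ and yields the stated error $O(t^{-\delta'})$ for some $\delta'>0$ (concretely, any $\delta'<\sigma/(\kappa+1)$ works after taking $c$ sufficiently small).

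The main obstacle is the bookkeeping in the Perron truncation error $E(t,T)$, especially the contribution of integers $n$ with $|\log(t/n)|$ small, which must be handled separately from the bulk of the Dirichlet sum; it is precisely the polynomial growth condition (3) that forces $T$ to be a specific small power of $t$ and makes all error terms decay as a genuine power of $t$ simultaneously.
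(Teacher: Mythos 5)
Your argument is correct: the truncated Perron formula, the contour shift to $\Re e(s)=-\sigma$ with $0<\sigma<\delta$, the residue computation giving $\Theta\,P(\log t)/b!$ with $P$ monic (note $\Theta=a_{-b}\ne 0$ precisely because the pole is assumed to have order $b$), and the choice of $T$ as a small power of $t$ all go through, with hypothesis (3) supplying exactly the polynomial bound needed on the shifted line and the horizontal connectors. The paper itself gives no proof of this lemma --- it simply cites the Tauberian theorem of Chambert-Loir and Tschinkel and remarks that moving the pole from $s=a>0$ to $s=0$ is routine --- but the cited result is established by precisely the Perron-plus-contour-shift argument you describe, so your proposal reconstructs the standard proof the paper is deferring to.
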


In fact \cite[Appendice~A]{perron} deals only with Dirichlet series possessing
a unique pole at $s=a>0$, but the extension to a pole at $s=0$ is
straightforward.  We apply Lemma \ref{lem:tauber} to estimate $M_\kk(t)$,
for \[\kk \in \{(2,4,3,2,3),(3,6,4,2,5)\}.\] We have already seen that the
corresponding Dirichlet series $F_\kk(s)$ satisfies parts (1) and (2) of the
lemma, with $b=5$.  The third part follows from the boundedness of
$G_{\kk}(s)$ on the half-plane $\RE(s) \ge -1/12+\ep$, and standard upper
bounds for the size of the Riemann zeta function in the critical strip.  In
view of the fact that
\[\lim_{s\rightarrow 0} s^bF(s) = \frac{G_\kk(0)}{\prod_{j=1}^5 k_j},\]
we therefore conclude that 
\begin{equation}
  \label{eq:perron}
  M_\kk(t) = \frac{G_\kk(0)P(\log t)}{5! \cdot \prod_{j=1}^5 k_j}
  +O(t^{-\delta}),
\end{equation}
for some $\delta>0$ and some monic polynomial $P$ of degree $5$.

We are now ready to complete the proof of our theorem.  Recall the definition
\eqref{eq:om} of $\omega_H(\tS)$.  It therefore follows on combining
\eqref{eq:alpha}, \eqref{eq:anna-walk}, \eqref{eq:anna-run} and
\eqref{eq:perron} that \[N_{U,H}(B)= \alpha(\tS)\omega_H(\tS)B (\log B)^5
+O\big(B(\log B)^{5-2/7}\big),\] as required.

\end{document}